\theoremstyle{plain}
\newtheorem{theorem}{Theorem}[section]
\newtheorem{lemma}[theorem]{Lemma}
\newtheorem{corollary}[theorem]{Corollary}
\newtheorem{proposition}[theorem]{Proposition}
\newtheorem{conjecture}[theorem]{Conjecture}
\newtheorem{question}[theorem]{Question}
\theoremstyle{remark}
\newtheorem*{claim}{Claim}
\newcommand{\tmfrac}[2]{\mbox{\large$\frac{#1}{#2}$}} 
\def\be{\begin{equation}}
\def\ee{\end{equation}}
\def\co{\colon}
\def\RR{\mathcal{R}}
\def\R{\mathbb{R}}
\def\gl{\op{GL}}
\def\Q{\mathbb{Q}}\def\K{\mathbb{K}}\def\F{\mathbb{F}}
\def\id{\op{id}}
\def\Z{\mathbb{Z}}\def\C{\mathbb{C}}
\def\N{\mathbb{N}}\def\l{\lambda}
\def\part{\partial}\def\ll{\langle}\def\rr{\rangle}
\def\a{\alpha}
\def\bp{\begin{pmatrix}}
\def\sm{\setminus}\def\ep{\end{pmatrix}}
\def\bn{\begin{enumerate}}
\def\en{\end{enumerate}}\def\ba{\begin{array}}
\def\ea{\end{array}}
\def\a{\alpha}
\def\b{\beta}\def\ti{\tilde}
\def\hull{\op{conv}}
\def\fr12{\frac{1}{2}}
  \def\im{\op{Im}}
\def\ker{\op{Ker}}
\def\hom{\op{Hom}}
\def\deg{\op{deg}}
\def\degphi{\deg_\phi}
\def\G{\Gamma}
\def\ol{\overline}
\def\op{\operatorname}
\def\K{\mathbb{K}}
\def\th{\op{th}}
\def\cmtbf#1{} \def\cmt#1{}
\def\NN{\mathcal{N}}
\def\PP{\mathcal{P}}
\def\EEAA{\mathcal{EA}}
\def\TTEEAA{\mathcal{TEA}}
\def\TT{\mathcal{T}}
\def\MM{\mathcal{M}}
\def\QQ{\mathcal{Q}}
\def\VV{\mathcal{V}}
\def\WW{\mathcal{W}}
\def\CC{\mathcal{C}}
\def\wti{\widetilde}
\def\sym{{\op{sym}}}
\def\TT{\mathcal{T}}
\def\th{\op{th}}
\begin{document}

\title{Thurston norm via Fox calculus}

\author{Stefan Friedl}
\address{Fakult\"at f\"ur Mathematik\\ Universit\"at Regensburg\\   Germany}
\email{sfriedl@gmail.com}

\author{Kevin Schreve}
\address{Department of Mathematics\\
University of Wisconsin-Milwaukee\\   Milwaukee, WI 53211-3029\\USA}
\email{kschreve@uwm.edu}

\author{Stephan Tillmann}
\address{School of Mathematics and Statistics\\ The University of Sydney\\ NSW 2006\\ Australia} 
\email{stephan.tillmann@sydney.edu.au}
\def\subjclassname{\textup{2000} Mathematics Subject Classification}
\expandafter\let\csname subjclassname@1991\endcsname=\subjclassname \expandafter\let\csname
subjclassname@2000\endcsname=\subjclassname \subjclass{Primary 
57M27,   
57M05   
; Secondary 
20J05, 
57R19 
}

\keywords{Thurston norm, 3-manifold, Novikov ring, Fox calculus}

\begin{abstract}
In 1976 Thurston associated to a $3$--manifold $N$ a marked polytope in $H_1(N;\R),$ which measures the minimal complexity of surfaces representing homology classes and determines all fibered classes in $H^1(N;\R)$.
Recently the first and the last author  associated to a presentation $\pi$ with two generators and one relator a marked polytope in $H_1(\pi;\R)$ and showed that it determines the Bieri--Neumann--Strebel invariant of $\pi$.
In this paper, we show that if the fundamental group of a  3--manifold $N$ admits such a presentation $\pi$, then the corresponding marked polytopes in $H_1(N;\R)=H_1(\pi;\R)$ agree.
\end{abstract}
 
\maketitle
\section{Summary of results}
Throughout this paper all $3$--manifolds are compact, connected and orientable. Suppose $N$ is a 3--manifold.
In 1976 Thurston \cite{Th86} introduced a seminorm $x_N$ on $H^1(N;\mathbb{R})$, henceforth referred to as the \emph{Thurston norm}, which is a natural measure of the complexity of surfaces dual to integral classes.
A class $\phi\in H^1(N;\R)$ is \emph{fibered} if $\phi$ can be represented by a non-degenerate closed 1-form.
If $\phi$ is integral, then $\phi$ is fibered if and only if it is induced by a surface bundle $N\to S^1$.
We refer to \ref{section:thurstonnorm} for details.

Thurston \cite{Th86} showed that the information on the Thurston seminorm and the fibered classes can be encapsulated in terms of a \emph{marked polytope}.
A marked polytope is a polytope in a vector space together with a (possibly empty) set of marked vertices. In order to state Thurston's result precisely we need one more definition. Given a  polytope  in a vector space $V$ we say that a homomorphism $\phi\in \hom(V,\R)$ \emph{pairs maximally with the vertex $v$} if $\phi(v)>\phi(w)$ for all other vertices $w\ne v$ .
In this language, the main result of Thurston \cite{Th86} can be stated as follows.

\begin{theorem}[Thurston]\label{thm:thurston}
Let $N$ be a 3--manifold. There exists a unique symmetric marked polytope $\MM_N$ in $H_1(N;\R)$ with vertices in $H_1(N;\Z)/\mbox{torsion}\subset H_1(N;\R)$ such that for any $\phi\in H^1(N;\R)=\hom(\pi_1(N),\R)$ we have 
\[ x_N(\phi)=\max\{ \phi(p)-\phi(q)\,|\, p,q\in \MM_N\} \]
and such that $\phi$ is fibered if and only if it pairs maximally with a marked vertex of $\MM_N$. 
\end{theorem}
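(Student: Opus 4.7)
The plan is to (a) define $x_N$ on integral cohomology classes from surface complexity and show it extends to a seminorm on $H^1(N;\R)$ with a rational polyhedral unit ball, (b) dualize to obtain $\MM_N$ as a centrally symmetric polytope with integer vertices, and (c) identify the marked vertices with those whose dual open cones consist of fibered classes, via a fibered face theorem.

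For (a), I would set $x_N(\phi) := \min \chi_-(S)$, where $S$ ranges over properly embedded oriented surfaces dual to $\phi \in H^1(N;\Z) \cong [N,S^1]$ and $\chi_-(S) = \sum_i \max(0,-\chi(S_i))$ sums over components, discarding spheres and disks. The central technical lemma is subadditivity: given minimizers $S_\phi$ and $S_\psi$ in general position, a double curve sum (resolving the intersection circles coherently with the coorientations) produces an oriented surface dual to $\phi+\psi$ of Euler characteristic $\chi(S_\phi)+\chi(S_\psi)$, and one verifies the resolution creates no new spheres or disks. Integer homogeneity is automatic from taking parallel copies, so $x_N$ extends by $\Q$-linearity and then by continuity (using integrality on $H^1(N;\Z)$) to a seminorm on $H^1(N;\R)$. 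This integrality on the lattice, together with convexity, forces the unit ball $B_N = \{x_N \leq 1\}$ to be a (possibly noncompact) rational polyhedron. I would then define $\MM_N \subset H_1(N;\R)$ as the unique centrally symmetric polytope whose width function $\phi \mapsto \max_{p,q \in \MM_N}(\phi(p)-\phi(q))$ equals $x_N$. Uniqueness follows because centrally symmetric convex bodies are determined by their width function, and the vertices of $\MM_N$ lie in $H_1(N;\Z)/\text{torsion}$ because they are precisely the integral extreme points dual to the facets of $B_N$.

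Identifying the marked vertices requires the fibered face theorem: the set of fibered classes in $H^1(N;\R)$ is an open union of open cones over top-dimensional open faces of $\partial B_N$. I would assemble three ingredients: (i) Stallings' theorem, characterizing integral fibered classes as those for which the induced map $\phi_*:\pi_1(N) \to \Z$ has finitely generated kernel; (ii) openness of the fibered locus in the $C^\infty$ topology on closed $1$-forms, combined with de Rham density of smooth forms in $H^1(N;\R)$; and (iii) for fibered $\phi$ with fiber $F$, the identity $x_N(\phi) = -\chi(F)$, which varies linearly in $\phi$ as one deforms through nearby fibrations. Together these force the fibered locus to fill entire open cones over top-dimensional open faces of $\partial B_N$. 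Declaring a vertex $v$ of $\MM_N$ to be marked exactly when its dual open cone $\{\phi : \phi(v) > \phi(w) \text{ for all } w \neq v\}$ consists of fibered classes then yields the theorem.

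The main obstacle is the fibered face theorem, and specifically the combination of (iii) with the assertion that fibered classes fill \emph{entire} open dual cones rather than merely dense subsets of them. Showing this requires that for every integral $\phi$ in the cone over an open top-dimensional face containing a known fibered class, the kernel of $\phi_*$ is finitely generated. The geometric heart of the argument is to interpolate fibrations across the face using the linearity of $x_N$ there together with sutured-manifold or Morse-theoretic decompositions of the mapping torus, and to control how fiber surfaces deform as $\phi$ varies through the face without accumulating sphere or disk components that would destroy the equality $x_N(\phi)=-\chi(F)$.
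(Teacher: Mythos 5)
The paper does not prove this theorem; it is attributed to Thurston, cited to [Th86], and Section 2.4 only paraphrases the result. So there is no internal proof to compare against, and the relevant question is how faithfully your sketch reconstructs Thurston's original argument. In outline it does: subadditivity via oriented cut-and-paste, polyhedrality of the dual ball from integrality of the seminorm on the lattice, and a fibered face theorem for the markings.

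Three steps are compressed past the point of being checkable, however, and the first would fail as stated. The oriented double curve sum of two norm-minimizing surfaces need not avoid sphere or disk components, and when it does produce some the inequality $\chi_-(\mathrm{result})\le\chi_-(S_\phi)+\chi_-(S_\psi)$ breaks down; Thurston first reduces the intersection by an innermost-circle and innermost-arc induction, exploiting norm-minimality to eliminate inessential intersection curves (a compressing disk for $S_\phi$ or $S_\psi$ would strictly lower $\chi_-$), and only after that estimates the complexity of the resolution. Second, ``integer homogeneity is automatic from taking parallel copies'' gives only $x_N(n\phi)\le n\,x_N(\phi)$; the reverse inequality is not formal and needs a separate argument. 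Third, the identity $x_N(\phi)=-\chi(F)$ for a fibered class $\phi$ with fiber $F$ --- that the fiber is norm-minimizing --- is itself one of Thurston's main theorems, not an available input to your step (iii); he proves it as a special case of the statement that compact leaves of taut foliations realize the norm. You do correctly isolate the remaining difficulty, namely that the fibered classes must fill entire open top-dimensional cones rather than merely dense subsets of them, and the ideas you gesture at (linearity of the norm over a face, Stallings' fibration criterion, openness of the set of nonsingular closed 1-forms) are the ones Thurston uses to close the argument.
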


Subsequently,  by a  $(2,1)$--presentation we mean a group presentation with precisely two generators and one non-empty relator. A $(2,1)$--presentation is \emph{cyclically reduced} if the relator is a cyclically reduced word.
Recently the first and the third author  \cite{FT15} associated to a cyclically reduced $(2,1)$--presentation $\pi=\ll x,y\,|\, r\rr$ 
a marked polytope $\MM_\pi$ in $H_1(\pi;\R)$.

Now we  outline the definition of $\MM_\pi$ in the case that $b_1(\pi)=2$. A different (but equivalent) definition is given in Section \ref{section:defpolytopepi}, as well as a definition for cyclically reduced $(2,1)$--presentations  $\pi$ with $b_1(\pi)=1$.

Identify $H_1(G_\pi;\Z)$ with $\Z^2$ such that $x$ corresponds to $(1,0)$ and $y$ corresponds to $(0,1)$. 
Then the relator $r$ determines a discrete walk on the integer lattice in $H_1(G_\pi;\R),$ and the marked polytope $\MM_\pi$ is obtained from the convex hull of the trace of this walk as follows:
\bn
\item Start at the origin and walk across $\Z^2$ reading the word $r$ from the left.
\item Take the convex hull $\mathcal{C}$ of the set of all lattice points reached by the walk.
\item Mark precisely those vertices of $\mathcal{C}$ which the walk passes through exactly once.
\item Now consider the unit squares that are completely contained in $\mathcal{C}$ and touch a vertex of $\mathcal{C}$. Mark a midpoint of a square precisely when one (and hence all) vertices of $\mathcal{C}$ incident with the square are marked.
\item The set of vertices of $\MM_\pi$ is the set of midpoints of all of these squares, and a vertex of $\MM_\pi$ is marked precisely when it is a marked midpoint of a square.
\en

In Figure \ref{fig:dunfield-intro} we sketch the construction of $\MM_\pi$ for the presentation $\pi=\ll x,y\,|\, r\rr,$ where
\[r=x^2yx^{-1}yx^2yx^{-1}y^{-3}x^{-1}yx^2yx^{-1}yxy^{-1}x^{-2}y^{-1}xy^{-1}x^{-2}y^{-1}xy^3xy^{-1}x^{-2}y^{-1}xy^{-1}x^{-1}y.\]
This example is due to Dunfield~\cite{Du01} and presents the fundamental group of the exterior of the  2--component link in $S^3$ shown in Figure \ref{fig:dunfield} (see Section~\ref{section:dunfield}). 
\begin{figure}[h]
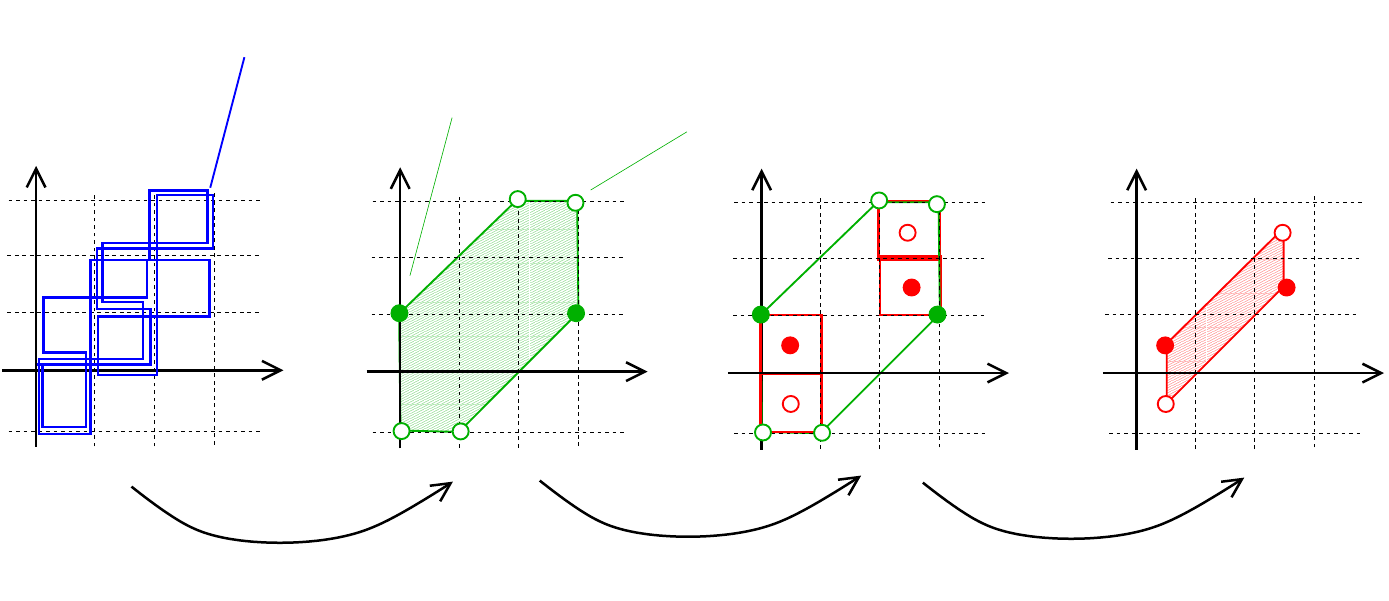
\caption{The marked polytope $\MM_\pi$ for Dunfield's example (labels on arrows correspond to steps in the above algorithm).}\label{fig:dunfield-intro}
\end{figure}

Given two polytopes $\PP$ and $\QQ$ in a vector space $V,$ we write $\PP\doteq \QQ$ if the polytopes $\PP$ and $\QQ$ differ by a translation, i.e. if there exists $v\in V$ with $\PP=v+\QQ$. The following is the main theorem of this paper.

\begin{restatable}{theorem}{mainthm}
\label{mainthm}
Let $N$ be an irreducible  3--manifold that admits a cyclically reduced $(2,1)$--presentation  $\pi=\langle x,y\,|\, r\rangle$. Then
\[ \mathcal{M}_N\doteq \mathcal{M}_\pi.\]
\end{restatable}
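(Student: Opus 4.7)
The plan is to identify both $\MM_N$ and $\MM_\pi$ with (a translate of) the Newton polytope of the Alexander polynomial $\Delta\in\Z[H_1(\pi;\Z)/\mathrm{tor}]$ of the presentation, computed via Fox calculus. Recall that when $b_1(\pi)=2$ (the case $b_1(\pi)=1$ being entirely analogous), the relator $r$ is null in $H_1(\pi;\Z)$, so the fundamental Fox identity $\sum_i (\partial r/\partial x_i)(x_i - 1) = 0$ holds in $\Z[H_1(\pi;\Z)/\mathrm{tor}]$ and forces
\[\partial r/\partial x \;=\; (y-1)\Delta, \qquad \partial r/\partial y \;=\; -(x-1)\Delta\]
for a single element $\Delta$. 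Both polytopes in the theorem should reduce, up to translation, to $\mathcal{N}(\Delta)$.

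First I would match $\MM_\pi$ with $\mathcal{N}(\Delta)$ combinatorially. Each horizontal step of the walk contributes (with sign) one monomial to $\partial r/\partial x$ located at the left endpoint of the traversed unit edge, and symmetrically for $\partial r/\partial y$. After accounting for cancellations between repeatedly traversed edges, this identifies the convex hull $\mathcal{C}$ of the walk with the convex hull of the joint support of $\partial r/\partial x$ and $\partial r/\partial y$, which by the identities above equals the Minkowski sum $\mathcal{N}(\Delta) + [0,1]^2$. The midpoint-of-corner-squares construction in the definition of $\MM_\pi$ is then exactly Minkowski subtraction by this unit square, yielding $\MM_\pi \doteq \mathcal{N}(\Delta)$. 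The vertex-marking rule (walk passes through a vertex exactly once) translates cleanly to the extremal coefficient of $\Delta$ in the corresponding direction being $\pm 1$, which by \cite{FT15} is the BNS-invariant condition.

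For the manifold side I would first argue that the presentation $2$-complex $Y_\pi$ is aspherical and homotopy equivalent to $N$. Irreducibility of $N$ together with the $(2,1)$-presentation force $\pi_1(N)$ infinite, $\partial N\ne\emptyset$ by Epstein's deficiency bound, and $\chi(N)=\chi(Y_\pi)=0$; since $N$ is then aspherical and $Y_\pi$ is a two-dimensional $K(\pi,1)$ with the same fundamental group and Euler characteristic, the classifying map $N\to Y_\pi$ is a homotopy equivalence. Consequently the Alexander invariants of $\pi$ coincide with those of $N$. McMullen's Alexander-norm inequality for $N$ applied to $\Delta$, together with sharpness coming from the efficient presentation (via Reidemeister/Turaev torsion, or equivalently Friedl--L\"uck's $L^2$-torsion in this aspherical setting), yields $\MM_N\doteq\mathcal{N}(\Delta)$. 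The fibered markings then match automatically via the Sikorav--Bieri--Neumann--Strebel characterization: $\phi$ is fibered on $N$ iff $\Delta$ becomes a unit in the $\phi$-Novikov completion of $\Z[H]$, iff the extremal coefficient of $\Delta$ in the $\phi$-direction is $\pm 1$.

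The hard point is the sharpness of McMullen's inequality for $N$. A priori one only gets $\mathcal{N}(\Delta)\subseteq \MM_N$ up to translation, and the reverse inclusion requires knowing that the Fox complex of $Y_\pi$ is a genuine free $\Z[H]$-resolution, so that the torsion of the maximal free-abelian cover of $N$ is computed by $\Delta$ with no extraneous factors; this is where the asphericity established above is essential. A minor secondary obstacle is the $b_1(\pi)=1$ case, which uses the alternative definition of $\MM_\pi$ from Section~\ref{section:defpolytopepi}; the same Fox-calculus strategy goes through, with the single Fox derivative playing the role of $\Delta$ after the Novikov completion in the direction of $\phi$.
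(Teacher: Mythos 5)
There is a fundamental gap in your plan, and it occurs right at the start when you identify $\MM_\pi$ with the Newton polytope $\mathcal{N}(\Delta)$ of the abelian Alexander polynomial. The polytope $\PP(r_x)$ in the definition of $\MM_\pi$ is the convex hull of $\psi(g)$ over $g$ in the support of $r_x$ viewed as an element of $\Z[\pi]$ (equivalently of $\Z[F]$, since the relator being cyclically reduced means by Weinbaum~\cite{We72} that the summands of $\partial r/\partial x$ stay pairwise distinct in $\Z[\pi]$). It is \emph{not} the Newton polytope of the image of $r_x$ in $\Z[H]$, because the walk records every lattice point it visits, including points where later the $\Z[H]$-images of two summands cancel. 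Your sentence ``after accounting for cancellations between repeatedly traversed edges'' is exactly the step that fails: cancellation in $\Z[H]$ does not shrink the convex hull $\mathcal{C}$ of the walk. So $\MM_\pi$ can be, and often is, strictly larger than $\mathcal{N}(\Delta)$.

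This is not a corner case; it is the whole point of the paper and of the example featured in Sections~1 and~6.3. If your identification were correct, combining it with the theorem would give $\MM_N\doteq\mathcal{N}(\Delta)$, i.e. that the (commutative) Alexander norm equals the Thurston norm. But Dunfield's link~\cite{Du01}, whose $(2,1)$-presentation is displayed in the introduction, was constructed precisely as a counterexample to that equality, and the computation in Section~6.3 exhibits $\PP_\pi$ as genuinely larger than the Alexander polytope. For the same reason the claimed ``sharpness'' of McMullen's bound cannot be deduced from asphericity of the presentation complex: $X_L$ is a hyperbolic link exterior, hence aspherical, and yet the Alexander norm is deficient there. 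The paper's actual proof of Proposition~\ref{mainprop2} is forced to work noncommutatively, using Corollary~\ref{cor:groupring} (which requires Theorem~\ref{thm:pinresg}, hence the Virtually Special Theorem) to find a map $\pi\to\G\in\TTEEAA$ under which the relevant extremal components of $r_y$ in $\Z[\pi]$ survive, and then invoking the Ore localization $\K(\G)$ and the degree estimates for non-commutative Reidemeister torsion~\cite{Co04,Ha05,Fr07}. For Proposition~\ref{mainprop1} the paper uses twisted Reidemeister torsions over $\C(H)$ for finite-dimensional representations together with the detection theorem of~\cite{FV12}. Both of these steps are exactly there to compensate for the failure of the abelian Alexander polynomial, so they cannot be replaced by the commutative argument you propose.

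Your outline of the combinatorics (walk versus Fox derivative), the Fox fundamental identity producing a $\Delta$ with $\partial r/\partial x=(y-1)\Delta$ over $\Z[H]$, the BNS / Novikov-completion characterization of the markings, and the reduction of the $b_1=1$ case are all fine as far as they go, and the Epstein argument for toroidal boundary matches Theorem~\ref{thm:ep61}. But the central ``hard point'' you flag is not a technical obstacle to be finessed --- it is false, and the proof must be rebuilt around noncommutative torsion as in Sections~4 and~5.
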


The proof of Theorem \ref{mainthm} relies on the Virtually Special Theorem of Agol \cite{Ag13}, Liu \cite{Li13}, Przytycki-Wise \cite{PW12,PW14} and Wise \cite{Wi09,Wi12a,Wi12b}, which we recall in Section \ref{section:special}. It also hinges on the following general result, which is of independent interest. 

\begin{restatable}{theorem}{pinresg}
\label{thm:pinresg}
Let $N$ be an irreducible 3--manifold with empty or toroidal boundary. If $N$ is not a closed graph manifold, then $\pi_1(N)$ is residually a torsion-free and elementary amenable group. 
\end{restatable}

The proof of Theorem~\ref{thm:pinresg} uses the Virtually Special Theorem and builds on work of Linnell--Schick \cite{LS07}. It is proved in Section~\ref{section:3mfdgroups}, where we also give several consequences.\\

We give a brief outline of the proof of Theorem \ref{mainthm}.
The starting point is an alternative definition of the marked polytope $\mathcal{M}_\pi$ using Fox derivatives \cite{Fo53} (see Section~\ref{section:pipolytope}). This definition is less pictorial, but it allows us  to relate the polytope $\mathcal{M}_\pi$ to the chain complex of the universal cover of the 2--complex $X$ associated to the presentation $\pi$. This makes it possible to study the `size' of $\mathcal{M}_\pi$ using twisted Reidemeister torsions corresponding to finite-dimensional complex representations and corresponding to skew fields of $X$ \cite{Wa94,Co04,Ha05,Fr07,FV10}. Since $X$ is simple homotopy equivalent to $N,$ these twisted Reidemeister torsions agree with the twisted Reidemeister torsions of $N$ . 

In the following we denote $\mathcal{P}_N$ and $\PP_\pi$ the polytopes $\MM_N$ and $\MM_\pi$ without the markings.
Given two polytopes $\PP$ and $\QQ$ in a vector space $V$ we write $\PP\leq \QQ$ if there exists $v\in V$ with $v+\PP\subset \QQ$. 
The proof of Theorem \ref{mainthm} now breaks up into three parts:
\begin{enumerate}
\item We first show that $\PP_N\leq \PP_\pi$. Put differently, we  show that $\PP_\pi$ is `big enough' to contain $\PP_N$. This is achieved with the main theorem of \cite{FV12}, which states that twisted Reidemeister torsions corresponding to finite-dimensional complex representations detect the Thurston norm of $N$. 
This relies on the Virtually Special Theorem. See Section~\ref{section:thurstonsmaller}.
\item Next we show the reverse inclusion  $\PP_\pi\leq \PP_N$. This means that   $\PP_\pi$ is `not bigger than necessary'. At this stage it is crucial that $r$ is  cyclically reduced. By \cite{We72} this implies that the summands in the Fox derivative $\frac{\partial r}{\partial x}$ are pairwise distinct elements in the group ring $\Z[\pi]$. Using Theorem \ref{thm:pinresg} and the non-commutative Reidemeister torsions of \cite{Co04,Ha05,Fr07} we show that indeed  $\PP_\pi\subset \PP_N$. See Section~\ref{section:thurstonlarger}.
\item Finally we need to show that the markings of $\MM_N$ and $\MM_\pi$ agree. This follows immediately from \cite[Theorem~1.1]{FT15} and \cite[Theorem~E]{BNS87}. See Section~\ref{section:proofmainthm}.
\end{enumerate}
The paper is concluded with a conjecture and a question in Section \ref{section:questions}.

\subsection*{Convention.}
Throughout this paper, all groups are finitely generated,  all vector spaces finite dimensional, and 
all 3--manifolds are compact, connected and orientable.

\subsection*{Acknowledgments.} 
The first author is very grateful for being hosted by the University of Sydney in March 2013 and again in March 2014. The first and the second author also wish to thank the Mathematische Forschungsinstitut Oberwolfach for hosting them in January 2015.
The first author was supported by the SFB 1085 `Higher invariants', funded by the Deutsche Forschungsgemeinschaft (DFG).  The third author is partially supported under the Australian Research Council's Discovery funding scheme (project number DP140100158). 
The authors thank Nathan Dunfield for very insightful conversations.

\section{Polytopes associated to 3--manifolds and groups}\label{section:prelims}

\subsection{Polytopes}
Let $V$ be a real vector space and let $Q=\{Q_1,\dots,Q_k\}\subset V$ be a finite (possibly empty) subset. Denote
\[ \PP(Q)=\hull(Q)=\left\{ \sum_{i=1}^k t_iQ_i\,\left|\ \sum_{i=1}^k t_i= 1, \ t_i\geq 0\right.\right\}.\]
the \emph{polytope spanned by $Q$}.
A \emph{polytope} in $V$ is a subset of the form $\PP(Q)$ for some finite subset $Q$ of $V$.  For any polytope $\PP$ there exists a unique smallest subset $\VV(\PP)\subset \PP$ such that $\PP$ is the polytope spanned by $\VV(\PP)$.
The elements of $\VV(\PP)$ are the \emph{vertices} of $\PP$. 
Note $v\in \PP$ is a vertex if and only if there exists a homomorphism $\phi\co V\to \R$ such that 
$\phi(v)>\phi(p)$ for every $p\ne v \in \PP$.

Let $V$ be a real vector space and let $\PP$ and $\QQ$ be two polytopes in $V$. The \emph{Minkowski sum of $\PP$ and $\QQ$}
is
\[ \PP+\QQ:=\{ p+q\,|\, p\in \PP\mbox{ and }q\in \QQ\}.\]
It is straightforward to see that $\PP+\QQ$ is again a polytope. Furthermore, for each vertex $u$ of $\PP+\QQ$
there exists a unique vertex $v$ of $\PP$ and a unique vertex $w$ of $\QQ$ such that $u=v+w$. Conversely, for each vertex $v$ of $\PP$ there exists a (not necessarily unique) vertex $w$ of $\QQ$ such that $v+w$ is a vertex of $\PP+\QQ$.

If $\PP,\QQ$ and $\RR$ are polytopes with $\PP+\QQ=\RR,$ then we  write $\PP=\RR-\QQ$.
We have
\[ \PP=\{p\in V\,|\, p+\QQ\subset \RR\},\]
in particular $\RR-\QQ$ is well-defined.

There is a natural scaling operation on polytopes:
\[\l\cdot \PP:=\{\l p|p\in \PP\},\]
where $\PP\subset V$ is a polytope and $\l\in \R^+.$ If $k\in \N$, then  the Minkowski sum of $k$ copies of $\PP$ equals $k\PP$.

\subsection{Convex sets and seminorms}\label{section:convex}
Let $\CC$ be a non-empty convex set in the real vector space $V$. Given $\phi\in \hom(V,\R)$ we define the \emph{thickness of $\CC$ in the $\phi$-direction} by
\[ \th_{\CC}(\phi):=\max\{ \phi(c)-\phi(d)\,|\,c,d\in \CC\}.\]
It is straightforward to see that the function 
\[ \ba{rcl} \l_{\CC}\co \hom(V,\R)&\to& \R_{\geq 0} \\
\phi&\mapsto &\max\{ \phi(c)-\phi(d)\,|\,c,d\in \CC\}\ea \]
is a seminorm.
Conversely, a seminorm $\l\co \hom(V,\R)\to \R_{\geq 0}$ defines the convex set
\[ \CC(\l):=\{v\in V\,|\, \l(v)\leq 1\}.\]
Note that $\CC(\l)$ is \emph{symmetric} since $v\in \CC(\l)$ implies $-v\in \CC(\l)$.
For any seminorm $\l$ we have $\l_{\CC(\l)}=\l$.
On the other hand, if $\CC$ is a non-empty convex set, then $\CC(\l_\CC)$ equals the symmetrization $\CC^{\sym}$ of $\CC$,
\[ \CC^{\sym}:=\{ \tmfrac{1}{2}(c-d)\,|\, c,d\in \CC\}.\]
Finally,  given a convex set $\CC$ in $V$ the \emph{dual} of $\CC$ is
\[ \CC^*:=\{\phi\in \hom(V,\R)\,|\, \phi(v)\leq 1\mbox{ for all }v\in \CC\}.\]

\subsection{Marked polytopes}
Let $V$ be a real vector space.
A \emph{marked polytope} $\MM$ in $V$ is a polytope $\PP$ and a (possibly empty) subset $\VV^+$ of $\VV(\PP).$ The elements of $\VV^+$ are the \emph{marked vertices}, the elements of $\VV(\PP)\setminus \VV^+$ are the \emph{unmarked vertices} and $\PP$ is the \emph{underlying polytope} of $\MM$.

If $\MM = (\PP, \VV^+)$ and $\NN= (\QQ, \WW^+)$  are two marked polytopes, then the \emph{Minkowski sum of $\MM$ and $\NN$} has underlying polytope the Minkowski sum of the underlying polytopes and set of marked vertices precisely those that are sums of marked vertices:
\[ \MM + \NN = (\;\PP+\QQ, \;\VV(\PP+\QQ) \cap (\VV^++\WW^+)\;).\]
The marked polytope $\MM = (\PP, \VV^+)$ is \emph{symmetric} if the underlying polytope $\PP$ is symmetric and $\VV^+ = - \VV^+.$

\subsection{The Thurston norm and fibered classes} \label{section:thurstonnorm}
Let $N$ be a 3--manifold. For each $\phi\in H^1(N;\mathbb{Z})$ there is a properly embedded oriented surface $\Sigma$, such that $[\Sigma]\in H_2(N, \partial N; \mathbb{Z})$ is the Poincar\'e dual to $\phi.$ Letting 
$\chi_-(\Sigma)=\sum_{i=1}^k \max\{-\chi(\Sigma_i),0\}$, 
where $\Sigma_1, \ldots, \Sigma_k$ are the connected components of $\Sigma,$ the \emph{Thurston norm} of $\phi\in H^1(N;\mathbb{Z})$  is 
 \[
x_N(\phi)=\min \big\{ \chi_-(\Sigma) \,|\, [\Sigma] = \phi \big\}.
\]

The class $\phi\in H^1(N;\R)$ is called \emph{fibered} if it can be represented by a non-degenerate closed 1-form.
By \cite{Ti70} an integral class $\phi\in H^1(N;\Z)=\hom(\pi_1(N),\Z)$ is fibered if and only if there exists a fibration
$p\co N\to S^1$ such that $p_*=\phi:\pi_1(N)\to \pi_1(S^1)=\Z$.

Thurston \cite{Th86} showed that $x_N$ extends to a seminorm $x_N$ on $H^1(N;\R)$ and that the dual $\CC(x_N)^*$ to the unit norm ball $\CC(x_N)$ of the seminorm $x_N$ 
is a polytope $\PP_N$ with vertices in $\im\{H_1(N;\Z)/\mbox{torsion}\to H_1(N;\R)\}$. Furthermore, Thurston showed that we can turn $\PP_N$ into a marked polytope $\MM_N,$ which has the property that $\phi\in H^1(N;\R)=\hom(H_1(N;\R),\R)$ is fibered if and only if it pairs maximally with a marked vertex.

\subsection{The marked polytope for elements of group rings}
Let $G$ be a group.
Throughout this paper, given $f\in \C[G]$ and $g\in G$ we denote $f_g$ the $g$--coefficient of $f$. Let $\psi\co G\to H_1(G;\Z)/\mbox{torsion}$ be the canonical map.

We write $V=H_1(G;\R)$ and we view
$H_1(G;\Z)/\mbox{torsion}$ as a subset of $V$. With this convention the above map $\psi$ gives rise to a map  $\psi\co G\to V$.
Given $f\in \C[G]$ we  refer to 
\[ \PP(f):=\PP\left(\{\psi(g)\,|\, g\in G\mbox{ with } f_g\ne 0\}\right)\subset V\]
as the \emph{polytope of $f$}.
We will now associate to $\PP(f)$ a marking. In order to do this we need a few more definitions.
\bn
\item 
For $v\in V$ we refer to $ f^v:=\sum_{g\in \psi^{-1}(v)}f_gg $
as the \emph{$v$--component of $f$}.
\item We say that an element $r\in \C[G]$ is a monomial if it is of the form  $r=\pm g$ for some $g\in G$.
\en
A vertex $v$ of $\PP(f)$ is \emph{marked} precisely when the $v$-component of $f$ is a  monomial.
We then refer to the polytope $\PP(f)$ together with the set of all marked vertices 
as the \emph{marked polytope $\MM(f)$ of $f$}. 
The proof of \cite[Lemma 3.2]{FT15} applies with the above definitions, to give:

\begin{lemma}\label{lem:productadd}
Let $G$ be a group and let $f,g\in \C[G]$. Then the following hold:
\bn
\item If for every vertex $v$ of $\PP(f)$ the $v$-component $f^v\in \C[G]$ is  not a zero divisor, then $\PP(f\cdot g)=\PP(f)+\PP(g)$.
\item If each vertex  of $\MM(f)$ is marked, then $\MM(f\cdot g)=\MM(f)+\MM(g)$.
\en 
\end{lemma}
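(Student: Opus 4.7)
The approach is to exploit the grading of $\C[G]$ by $V = H_1(G;\R)$ induced by $\psi$. Since $\psi$ is a group homomorphism, decomposing $f = \sum_v f^v$ and $g = \sum_w g^w$ yields
\[ (f\cdot g)^u \;=\; \sum_{v+w = u} f^v \cdot g^w \]
for every $u \in V$, where the sum has only finitely many nonzero terms. In particular, the support of $f\cdot g$ in $V$ is contained in the Minkowski sum of the supports of $f$ and $g$, which immediately gives the inclusion $\PP(f\cdot g) \subseteq \PP(f) + \PP(g)$.

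For part (1), I would use the crucial fact about Minkowski sums recalled in Section~\ref{section:prelims}: every vertex $u$ of $\PP(f)+\PP(g)$ admits a \emph{unique} decomposition $u = v + w$ with $v \in \VV(\PP(f))$ and $w \in \VV(\PP(g))$, and moreover no other pair $(v',w')$ in the supports of $f$ and $g$ can satisfy $v'+w' = u$ (this is precisely the extremality that characterises a vertex). Consequently the formula above collapses to $(f\cdot g)^u = f^v \cdot g^w$. By the hypothesis, $f^v$ is not a zero divisor in $\C[G]$, and $g^w \neq 0$ since $w$ lies in the support of $g$; therefore $(f\cdot g)^u \neq 0$, so $u$ lies in the support of $f\cdot g$. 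This gives the reverse inclusion $\PP(f) + \PP(g) \subseteq \PP(f\cdot g)$ and hence equality.

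For part (2), I would first observe that every monomial $\pm g_0 \in \C[G]$ is a unit, hence not a zero divisor. Thus the hypothesis of part (1) applies and $\PP(f\cdot g) = \PP(f) + \PP(g)$, so it only remains to match the markings. A vertex $u = v + w$ of $\PP(f\cdot g)$ is marked precisely when $(f\cdot g)^u = f^v \cdot g^w$ is a monomial; since $f^v$ is a monomial by hypothesis, this happens if and only if $g^w$ is a monomial, i.e.\ if and only if $w$ is a marked vertex of $\MM(g)$. On the other hand, by the definition of Minkowski sum of marked polytopes given in Section~\ref{section:prelims}, a vertex $u = v + w$ of $\MM(f) + \MM(g)$ is marked if and only if $v$ is marked in $\MM(f)$ and $w$ is marked in $\MM(g)$; under the assumption that every vertex of $\MM(f)$ is marked, this reduces to the same condition on $w$. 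Hence $\MM(f\cdot g) = \MM(f) + \MM(g)$.

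The only genuinely nontrivial point is the uniqueness of the vertex decomposition in a Minkowski sum, which rules out cross-cancellations in the sum defining $(f\cdot g)^u$; once this is in hand, both parts reduce to the observation that non-zero-divisors (respectively monomials) are stable under multiplication.
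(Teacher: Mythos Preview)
Your argument is correct. The paper does not actually prove this lemma; it simply states that the proof of \cite[Lemma~3.2]{FT15} carries over verbatim, so there is no in-paper proof to compare against. Your approach---using the $V$-grading induced by $\psi$, the formula $(fg)^u=\sum_{v+w=u}f^vg^w$, and the uniqueness of the vertex decomposition $u=v+w$ in a Minkowski sum to isolate $(fg)^u=f^vg^w$ at each vertex---is exactly the standard argument and is what the cited proof in \cite{FT15} amounts to.
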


\subsection{The marked polytope for a  $(2,1)$--presentation}\label{section:pipolytope}
\label{section:defpolytopepi}

Let $F$ be the free group with generators $x$ and $y$.
Following \cite{Fo53} we  denote by  $\frac{\partial }{\partial x}\co \Z[F]\to \Z[F]$ the \emph{Fox derivative with respect to $x$}, i.e. the unique $\Z$--linear map such that
\[
\frac{\partial 1}{\partial x}=0,\quad \frac{\partial x}{\partial x}=1, \quad \frac{\partial y}{\partial x}=0\;\mbox{ and }\;
\frac{\partial uv}{\partial x}=\frac{\partial u}{\partial x}+u\frac{\partial v}{\partial x}\]
for all  $u,v\in F$. We similarly define the Fox derivative with respect to $y$, and often write
\[ u_x:=\frac{\partial u}{\partial x}\;\mbox{ and }\;u_y:=\frac{\partial u}{\partial y}.\]

In \cite{FT15} we proved the following proposition.

\begin{proposition}\label{prop:foxpolytope}
Let $\pi=\ll x,y\,|\,r \rr$
be a  $(2,1)$--presentation with $b_1(\pi)=2$. Then there exists a marked polytope $\MM$, unique up to translation, such that 
\[ \MM+\MM(x-1)\doteq \MM(r_y) \;\mbox{ and }\; \MM+\MM(y-1)\doteq \MM(r_x).\]
\end{proposition}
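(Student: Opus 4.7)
The key input is the fundamental Fox calculus identity
\[
r-1 \;=\; (x-1)\, r_x \,+\, (y-1)\, r_y
\]
in $\Z[F]$. Since $b_1(\pi)=2$ forces $r\in [F,F]$, the image of $r$ in $\pi$ is $1$, and the identity descends under the projection $\Z[F]\to\Z[\pi]$ to
\[
(x-1)\, r_x \;=\; -(y-1)\, r_y \qquad \text{in }\Z[\pi].
\]
Each vertex of $\MM(x-1)$ has component a monomial ($\pm x$ or $\pm 1$), and similarly for $\MM(y-1)$: in particular every such vertex is marked, and every vertex component is a non-zero-divisor in $\Z[\pi]$. Since $\MM(-f)=\MM(f)$ directly from the definition, applying Lemma~\ref{lem:productadd}(ii) to both sides yields the key identity of marked polytopes
\[
\MM(x-1) + \MM(r_x) \;=\; \MM(y-1) + \MM(r_y) \qquad (\star)
\]
in $H_1(\pi;\R)$.

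Given $(\star)$, the task reduces to constructing a single marked polytope $\MM$ with $\MM + \MM(x-1) \doteq \MM(r_y)$: the identity $\MM + \MM(y-1) \doteq \MM(r_x)$ then follows from $(\star)$ by Minkowski cancellation, and uniqueness of $\MM$ up to translation is obtained by the same cancellation applied to any two candidates $\MM,\MM'$. For existence I would define $\MM$ directly from the walk of $r$ in $\Z^2=H_1(\pi;\Z)$: namely, $\MM$ is the marked polytope produced by the midpoint-of-unit-squares construction of the introduction, with markings as prescribed there. The polytope $\MM(r_y)$ is, by inspection of the support of $r_y$, the convex hull of abelianizations of bottom endpoints of all vertical walk edges, while $\MM + \MM(x-1)$ slides each midpoint of $\MM$ along a horizontal unit segment. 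One then verifies, vertex by vertex, that the extreme vertices of $\MM + \MM(x-1)$ coincide with those of $\MM(r_y)$, yielding equality up to a translation by $(\tfrac12,\tfrac12)$.

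The main obstacle is this last geometric identification: at every corner of the convex hull $\CC$ of the walk, one has to match the midpoint construction (with its subtle rule of marking only those squares that touch vertices of $\CC$ visited exactly once) against the extreme-vertex structure of $\MM(r_y)$. Once the two polytopes are matched, the markings propagate correctly, since Lemma~\ref{lem:productadd}(ii) preserves markings under Minkowski sum as long as all vertices of $\MM(x-1)$ are marked, as they are here.
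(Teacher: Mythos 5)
The paper does not prove Proposition~\ref{prop:foxpolytope} itself --- it only cites \cite{FT15} for it --- so I can only assess your argument on its own merits.

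Your high-level structure (derive a Fox-calculus relation $(\star)$ between the four marked polytopes, use Minkowski cancellation for uniqueness and to deduce the second equation from the first, and build $\MM$ from the walk to establish existence) is reasonable. But two things need fixing, one small and one substantial.

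First, the small one: with the paper's convention $\frac{\partial(uv)}{\partial x}=\frac{\partial u}{\partial x}+u\frac{\partial v}{\partial x}$, the fundamental formula puts the derivatives on the \emph{left}: $r-1=r_x(x-1)+r_y(y-1)$ in $\Z[F]$, not $(x-1)r_x+(y-1)r_y$ as you wrote. This matters for your invocation of Lemma~\ref{lem:productadd}(2), which as stated gives $\MM(f\cdot g)=\MM(f)+\MM(g)$ when all vertices of the \emph{left} factor $\MM(f)$ are marked; you need it for $\MM(r_x(x-1))$ where the marked factor $x-1$ is on the right. The conclusion is still correct, but you need to note that the lemma also holds with the roles of $f$ and $g$ reversed (this follows from the same argument, or by applying the anti-involution $g\mapsto g^{-1}$), rather than invoking it silently after an incorrect rewriting of the Fox identity. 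Also, a side remark: $r$ maps to $1$ in $\pi$ by definition of $\pi=F/\langle\!\langle r\rangle\!\rangle$ regardless of $b_1(\pi)$; what $b_1(\pi)=2$ actually buys you is that $\psi(x)$ and $\psi(y)$ are linearly independent, so $\MM(x-1)$ and $\MM(y-1)$ are nondegenerate segments and $\PP(r_x),\PP(r_y)$ live in a two-dimensional picture.

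Second, and more seriously: the existence half of the proposition --- that $\MM(r_y)$ actually admits $\MM(x-1)$ as a Minkowski factor, and that the factor $\MM$ (with its markings) is precisely the midpoint-of-squares polytope from the walk --- is the entire content of the result, and you explicitly flag it as ``the main obstacle'' without resolving it. The relation $(\star)$ is a necessary condition for a common factor $\MM$ to exist, but it is not sufficient: having $\PP(x-1)$ as a Minkowski summand of $\PP(r_y)$ is a genuine geometric constraint (a generic lattice polygon has no nontrivial segment as a Minkowski factor), and the delicate bookkeeping of the markings (which squares touching $\CC$ get marked, versus which vertex-components of $r_y$ are monomials) has to be carried through corner by corner. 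As written, your proposal identifies the correct target and the correct tools, but the load-bearing geometric verification is still a to-do. Until that step is supplied, the existence claim is not established.
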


Denote $\MM_\pi$ the  marked polytope of Proposition~\ref{prop:foxpolytope}. Up to translation it is a well-defined invariant of the presentation, and it is shown in \cite{FT15} that this definition is equivalent to the one sketched in the introduction. 
 
A $(2,1)$--presentation $\pi=\ll x,y\,|\,r \rr$ is \emph{simple} if $b_1(G_\pi)=1$, $x$ defines a generator of $H_1(\pi;\Z)/\mbox{torsion}$ and $y$ represents the trivial element in $H_1(\pi;\Z)/\mbox{torsion}$. In \cite{FT15} we showed that given a simple $(2,1)$--presentation $\pi=\ll x,y\,|\,r \rr$  there exists a marked polytope $\MM_\pi$, unique up to translation, such that 
\[ \MM_\pi+\MM(x-1)\doteq \MM(r_y).\]
It was shown in \cite{FT15} that there is a canonical way to associate to
any $(2,1)$--presentation $\pi=\ll x,y\,|\,r \rr$ with $b_1(G_\pi)=1$ a simple presentation $\pi'=\ll x',y'\,|\,r'\rr$ representing the same group.
We then define $\MM_\pi:=\MM_{\pi'}$. 
 
\subsection{3--manifold groups which admit $\mathbf{(2,1)}$--presentations}
\label{section:3manifoldgroups}
Manifolds having fundamental group with a $(2,1)$--presentation are described in Section~\ref{sec:Examples}. The only specific result needed to develop our theory is the following, which follows from work of Epstein \cite{Ep61}.

\begin{theorem}\label{thm:ep61}
Let $N$ be an irreducible (compact, connected and orientable) 3--manifold such that $\pi:=\pi_1(N)$ admits a  $(2,1)$--presentation.
Then the  boundary of $N$ consists of one or two tori.
\end{theorem}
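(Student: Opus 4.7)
The plan is to identify $N$ homotopically with the presentation 2--complex $X_\pi$ of $\langle x,y\,|\,r\rangle$, and then read the constraints on $\partial N$ off $\chi(X_\pi)=0$. After cyclic reduction we may assume $r$ is non--trivial in $F=\langle x,y\rangle$: a cyclically reduced non--empty word is automatically non--trivial, while the alternative $\pi=F$ would come from a relator that is empty after cyclic reduction and is therefore excluded by the standing convention on $(2,1)$--presentations. Since $H_1(\pi;\Z)=\Z^{2}/\langle \bar r\rangle$ has rank $\geq 1$, the group $\pi$ is infinite. Irreducibility of $N$ together with the Sphere Theorem gives $\pi_2(N)=0$; then a standard universal--cover argument (using non--compactness of $\widetilde N$, Poincar\'e duality and the universal coefficient theorem) shows $\widetilde N$ is contractible. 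Hence $N$ is a $K(\pi,1)$, and $\pi$ is torsion--free.

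By Lyndon's aspherical theorem for one--relator groups, torsion--freeness of $\pi$ forces $r$ not to be a proper power in $F$, and therefore $X_\pi$ is aspherical, i.e.\ also a $K(\pi,1)$. Consequently $X_\pi\simeq N$, which gives the two key numerical consequences
\[
\chi(N)=\chi(X_\pi)=1-2+1=0\quad\text{and}\quad H_3(N;\Z)\cong H_3(X_\pi;\Z)=0.
\]
The vanishing of $H_3$ rules out the closed case (where $H_3(N;\Z)=\Z$), so $\partial N\neq\emptyset$.

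Write $\partial N=\bigsqcup_{i=1}^k \Sigma_{g_i}$. From $\chi(\partial N)=2\chi(N)=0$ we obtain $\sum_{i=1}^k g_i=k$. Irreducibility together with infinitude of $\pi_1(N)$ excludes any $S^2$--component (such a component would force $N\cong D^3$), so each $g_i\geq 1$; combined with $\sum g_i=k$ this forces $g_i=1$ for every $i$, so every boundary component is a torus. Finally, the half--lives--half--dies principle gives $\op{rank}\bigl(H_1(\partial N;\Q)\to H_1(N;\Q)\bigr)=\tfrac12 b_1(\partial N)=k$, and since $b_1(N)=b_1(\pi)\leq 2$ we conclude $k\in\{1,2\}$. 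The main obstacle in the argument is the homotopy equivalence $X_\pi\simeq N$, which rests on Lyndon's asphericity theorem applied to the torsion--free one--relator group $\pi$; this is what upgrades the weak deficiency bound $\chi(N)\geq 0$ into the exact equality $\chi(N)=0$ needed to pin down the genera of $\partial N$.
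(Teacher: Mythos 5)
Your proof is correct, and it reaches the same conclusion as the paper's argument but by a genuinely different route. The paper stays entirely on the level of Euler characteristics and deficiency: it invokes Epstein's Lemma~1.7 to conclude that $\pi$ has deficiency~$1$, Epstein's Section~3 to rule out the closed case (closed irreducible $3$--manifold groups have deficiency~$0$), and Epstein's Lemma~2.2 to obtain the inequality $\frac{1}{2}\chi(\partial N)=\chi(N)\ge 0$; combining that inequality with the absence of spherical boundary components already forces $\chi(\partial N)=0$ and hence tori, and then half--lives--half--dies bounds the number of components. You instead promote the inequality to an equality $\chi(N)=0$ by establishing a genuine homotopy equivalence $N\simeq X_\pi$: asphericity of $N$ (Sphere Theorem plus the universal--cover argument) gives that $\pi$ is torsion--free, and Lyndon's asphericity theorem for one--relator groups then gives that $X_\pi$ is a $K(\pi,1)$, so the two spaces are homotopy equivalent. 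A nice by-product is that $H_3(X_\pi)=0$ rules out the closed case directly, with no reference to deficiency of closed $3$--manifold groups. Your approach trades Epstein's lemmas for the Sphere Theorem and Lyndon's theorem, which is arguably more conceptual but also deeper machinery; the paper's route is shorter given the specific references. One small point worth being careful about in your write-up: you correctly note that a $(2,1)$--presentation has a non-empty relator by convention, but you should also make explicit (as you do parenthetically) that a non-empty word that is trivial in $F$ would produce a free group and a genus--$2$ handlebody, so "non-empty relator" must indeed be read as "non-trivial element of $F$" for the theorem to be true --- your cyclic--reduction remark handles this cleanly.
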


\begin{proof}
It follows from \cite[Lemma 1.7]{Ep61} that $\pi$ has deficiency 1, and from \cite[Section~3]{Ep61} that the fundamental group of a closed irreducible 3--manifold has deficiency zero. Whence $N$ has non-empty boundary, and \cite[Lemma 2.2]{Ep61} implies that $\frac{1}{2} \chi(\partial N) = \chi(N)\ge 0.$ The 3--ball is the only irreducible 3--manifold with a spherical boundary component due to the Poincar\'e conjecture. Hence no boundary component of $N$ is a sphere since $\pi_1(N)\ne \{1\}$. Since $N$ (and hence each of its boundary components) is orientable, we now have $\chi(\partial N) = 0$ and every boundary component is a torus.

A standard half-lives-half-dies argument shows that $b_1(\partial N)\leq 2b_1(N)$. Since $b_1(N)\leq 2$ we deduce that $\partial N$ either consists of one or two tori.
\end{proof}

\section{Properties of 3--manifold groups}
\label{section:3mfdgroups}

\subsection{The Virtually Special Theorem}
\label{section:special}
Following Agol \cite{Ag08} we say that a group is  \emph{residually finite rationally solvable \textup{(}RFRS\textup{)}} if there
exists a sequence~$\{\pi_n\}$  of subgroups of $\pi$
such that  $\bigcap_n \pi_n=\{1\}$ and such that for any $n$ the following hold:
\bn
\item  the subgroup $\pi_n$ is  normal and finite index in $\pi$,
\item  $\pi_n\supseteq\pi_{n+1}$, and
\item the natural surjection $\pi_n\to \pi_n/\pi_{n+1}$ factors through $\pi_n\to H_1(\pi_n;\Z)/\mbox{torsion}$.
\en

As usual, given a property of groups or spaces we say this property is satisfied \emph{virtually} if a finite-index subgroup (not necessarily normal) or a finite-index cover (not necessarily regular) has the property.

The following theorem is now a variation on the Virtually Special Theorem.

\begin{theorem}\label{thm:virtuallyrfrs}
Let $N$ be an irreducible 3--manifold with empty or toroidal boundary. If $N$ is not a closed graph manifold, then $\pi_1(N)$ is virtually RFRS.
\end{theorem}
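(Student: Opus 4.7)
The plan is to combine the Virtually Special Theorem, cited in Section \ref{section:special}, with the known fact that right-angled Artin groups are RFRS and that the RFRS property descends to subgroups.

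First, under the stated hypotheses the Virtually Special Theorem guarantees that $\pi_1(N)$ contains a finite-index subgroup $\pi'$ which is the fundamental group of a compact \emph{special} non-positively curved cube complex. By Haglund--Wise's foundational work on special cube complexes, such a $\pi'$ embeds as a subgroup of some right-angled Artin group $A_\Gamma$.

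Second, every right-angled Artin group $A_\Gamma$ is itself RFRS. A convenient witnessing chain is the \emph{rational derived series}: set $A_0 := A_\Gamma$ and inductively let $A_{n+1}$ be the kernel of the composition $A_n \to A_n/[A_n,A_n] \to H_1(A_n;\Z)/\mbox{torsion}$, possibly passed to any finite-index refinement required by condition (1). The intersection $\bigcap_n A_n$ is trivial because $A_\Gamma$ is residually torsion-free nilpotent (via the Magnus embedding into an iterated power series ring), and conditions (1)--(3) in the definition of RFRS hold by construction. The needed hereditary property is then immediate: if $G$ is RFRS with chain $\{G_n\}$ and $H \leq G$, then the chain $\{H \cap G_n\}$ witnesses RFRS for $H$, since conditions (1) and (2) are obvious and condition (3) follows from naturality of abelianization applied to $H \cap G_n \hookrightarrow G_n$.

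Assembling these ingredients, $\pi'$ is a subgroup of a RFRS group and so is itself RFRS; hence $\pi_1(N)$ is virtually RFRS. The only step requiring any real care is matching the hypotheses of the theorem with the various forms of the Virtually Special Theorem in the literature: hyperbolic 3--manifolds (closed or cusped) are covered by Agol--Wise, mixed and non-geometric manifolds by Przytycki--Wise, Seifert-fibered pieces with boundary by Liu and Przytycki--Wise, and the lone excluded family is precisely the closed graph manifolds, which is exactly what the hypothesis rules out.
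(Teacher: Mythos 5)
The paper does not give a self-contained proof here: it simply cites the Virtually Special Theorem (attributed across the relevant cases to Agol, Wise, Liu and Przytycki--Wise) and defers the precise references to \cite{AFW15}, implicitly also invoking Agol's result that virtually special groups are virtually RFRS. Your proposal follows the same route and tries to fill in that second step, so the real question is whether your sketch of ``virtually special $\Rightarrow$ virtually RFRS'' is sound.

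Your hereditary-subgroup argument is correct and is exactly the right justification: for $H\le G$ and $\{G_n\}$ witnessing RFRS for $G$, the chain $\{H\cap G_n\}$ works, since $H_1(G_n;\Z)/\mbox{torsion}$ is torsion-free, so the composite $H\cap G_n\to H_1(G_n;\Z)/\mbox{torsion}$ kills the torsion of $H_1(H\cap G_n;\Z)$ and therefore factors through $H_1(H\cap G_n;\Z)/\mbox{torsion}$.

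The gap is in the claim that a RAAG $A=A_\Gamma$ is RFRS via the rational derived series. The kernel of $A\to H_1(A;\Z)/\mbox{torsion}$ is the commutator subgroup $[A,A]$, and $A/[A,A]\cong\Z^{|V|}$ is infinite; so already at the first step the term is of infinite index and condition (1) fails. You wave at this with ``finite-index refinements,'' but that is precisely where the content lies: if you replace each term of the rational derived series by a strictly larger finite-index subgroup $A_n'\supseteq A_n$, the intersection $\bigcap_n A_n'$ can only grow, and residual torsion-free nilpotence of $A$ --- which yields $\bigcap_n A_n=\{1\}$ for the original infinite-index chain --- gives no control whatsoever over $\bigcap_n A_n'$. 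Producing a finite-index chain whose intersection is nonetheless trivial and which still satisfies condition (3) at every stage is exactly the nontrivial content of Theorem~2.2 of \cite{Ag08}, proved there by an explicit inductive construction for right-angled Coxeter groups (into which RAAGs embed). The clean fix is simply to cite that theorem, which is also where the paper's pointer to \cite{AFW15} ultimately lands, rather than attempt to reprove it via the rational derived series.
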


The theorem was proved by Agol \cite{Ag13} for all closed hyperbolic 3--manifolds, by Wise \cite{Wi09,Wi12a,Wi12b} for all hyperbolic 3--manifolds with boundary, by Liu \cite{Li13} and Przytycki--Wise \cite{PW14} for all graph manifolds with boundary and by Przytycki--Wise \cite{PW12} for all 3--manifolds with a non-trivial JSJ-decomposition that has at least one hyperbolic JSJ-component. We refer to \cite{AFW15} for precise references.

In the following, given a 3--manifold $N$ we say that $\phi\in H^1(N;\R)$ is \emph{quasi-fibered} if it is a limit of fibered classes in $H^1(N;\R)$. 
The following theorem is an immediate consequence of Theorem \ref{thm:virtuallyrfrs} and Agol's virtual fibering theorem \cite[Theorem~5.1]{Ag08} (see also~\cite[Theorem~5.1]{FKt14} for an exposition).

\begin{theorem}\label{thm:virtuallyfib}\label{thm:virtualfib}
Let $N$ be an irreducible 3--manifold with empty or toroidal boundary. If $N$ is not a closed graph manifold, then for every $\phi \in H^1(N;\R)$ there exists a finite-index cover $p\colon N'\to N$ such that $p^*(\phi)$ is quasi-fibered.
\end{theorem}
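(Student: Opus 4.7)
The plan is very short: concatenate the finite covers produced by the two cited results. First I would apply Theorem~\ref{thm:virtuallyrfrs} to obtain a finite-index cover $q\colon \tilde N\to N$ with $\pi_1(\tilde N)$ RFRS. Because irreducibility, orientability, and having empty or toroidal boundary are all preserved under finite covers (the boundary of $\tilde N$ is a covering space of $\partial N$, so each of its components remains a torus), $\tilde N$ itself satisfies the hypotheses of Agol's virtual fibering theorem.

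Next I would invoke Agol's theorem \cite[Theorem~5.1]{Ag08} (equivalently, its real-class formulation in \cite[Theorem~5.1]{FKt14}) for the RFRS manifold $\tilde N$ applied to the pulled-back class $q^*\phi\in H^1(\tilde N;\R)$. This yields a further finite-index cover $r\colon N'\to\tilde N$ in which $r^*q^*\phi$ lies in the closure of a fibered cone of $N'$, i.e.\ is quasi-fibered. The composite $p:=q\circ r\colon N'\to N$ is then the required finite-index cover of $N$.

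I do not expect any substantive obstacle, since both inputs are off-the-shelf; the only routine verification is that the hypotheses of Agol's theorem really do survive the passage to $\tilde N$, which is immediate from the description of finite covers of 3--manifolds with toroidal or empty boundary. Should one prefer to quote the rational-class version of Agol's theorem, the extension to real classes follows from the fact that the quasi-fibered classes in any fixed finite cover form a finite union of closed rational-polyhedral cones, namely the closures of the Thurston fibered cones, so that approximating $\phi$ by a rational class in the same top-dimensional face of the relevant norm ball reduces the real case to the rational one in a single cover.
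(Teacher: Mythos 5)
Your proposal is correct and matches the paper's approach exactly: the paper states Theorem~\ref{thm:virtuallyfib} as an immediate consequence of Theorem~\ref{thm:virtuallyrfrs} together with Agol's virtual fibering theorem (citing \cite[Theorem~5.1]{FKt14} precisely for the real-class formulation you invoke), and your concatenation of the two finite covers is the intended argument.
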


If we apply the theorem to the zero class we get in particular the following corollary.

\begin{corollary}\label{cor:virtuallyfib}
An irreducible 3--manifold with empty or toroidal boundary is virtually fibered unless it is a closed graph manifold.
\end{corollary}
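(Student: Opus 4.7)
The plan is to derive the corollary as an almost immediate specialization of Theorem~\ref{thm:virtualfib}. The key observation is that the zero class $0\in H^1(N;\R)$ is a perfectly legitimate input to that theorem, and the conclusion for this particular class is exactly what virtual fiberedness asks for.

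Concretely, I would proceed in the following steps. First, assume $N$ is an irreducible 3--manifold with empty or toroidal boundary that is not a closed graph manifold, and apply Theorem~\ref{thm:virtualfib} to the class $\phi=0\in H^1(N;\R)$. This yields a finite-index cover $p\colon N'\to N$ such that $p^*(0)=0\in H^1(N';\R)$ is quasi-fibered, i.e.\ lies in the closure of the set of fibered classes in $H^1(N';\R)$.

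Next, I would unpack what quasi-fibered-ness of the zero class buys us: there exists a sequence of fibered classes $\phi_n\in H^1(N';\R)$ with $\phi_n\to 0$. In particular the set of fibered classes in $H^1(N';\R)$ is non-empty. By Thurston's theorem this set is an open cone, so it must contain classes lying in the image of $H^1(N';\Q)$, and after clearing denominators we obtain an integral fibered class $\phi'\in H^1(N';\Z)$. By the definition recalled in Section~\ref{section:thurstonnorm} (citing \cite{Ti70}), the existence of an integral fibered class is equivalent to the existence of a fibration $N'\to S^1$, so $N'$ is fibered and hence $N$ is virtually fibered.

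There is really no obstacle here; the proof is a one-line deduction once Theorem~\ref{thm:virtualfib} is in hand. All the substantive content—Agol's virtual fibering criterion, virtual RFRS-ness from the Virtually Special Theorem, and the openness of the fibered cone—has already been marshalled in the preceding results, so this corollary functions as a clean statement of what those theorems say when one feeds in the zero cohomology class.
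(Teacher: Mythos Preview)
Your proposal is correct and takes essentially the same approach as the paper: the paper's entire argument is the single remark ``If we apply the theorem to the zero class we get in particular the following corollary,'' and you have simply spelled out the implicit step that quasi-fiberedness of $0$ forces the fibered cone in $H^1(N';\R)$ to be non-empty and hence to contain an integral class.
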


\subsection{Residual properties of 3--manifold groups}
\label{section:resg}
We start with several definitions, most of which are standard. Let $\PP$ be a class of groups.
\bn
\item The group  $\pi$ is \emph{residually $\PP$} if for every non-trivial $g\in\pi$, there exists a homomorphism $\a\colon \pi\to \G$ to a group in $\G$ in $\PP$  such that $\a(g)\neq 1$. 
\item The group $\pi$ is \emph{fully residually $\PP$} if for every finite subset  $\{g_1,\dots,g_n\}\subset\pi\setminus \{1\}$, there exists a epimorphism $\a\colon \pi\to G$ to a group in $\G$ in $\PP$  such that $\a(g_i)\neq 1$ for all $i=1,\dots,n$. 
\item 
The group $\pi$ has the \emph{$\PP$--factorization property} if for every epimorphism $\a\colon \pi\to G$ onto a finite group $G$ there exists an epimorphism $\b\colon \pi\to \G$ to a group $\G$ in $\PP$  such that $\a$ factors through $\b$. 
\en
We are mostly interested in the following classes of groups.
\bn
\item The class $\EEAA$ of \emph{elementary amenable groups} is  the  smallest  class  of  groups \index{group!elementary-amenable}
that  contains  all  abelian  and  all  finite  groups and that  is closed under extensions and directed  unions. 
\item We denote by $\TTEEAA$ the class of all groups that are torsion-free and elementary amenable. It is clear that $\TTEEAA$ is closed under taking finite direct products.
\en
 
Using  Corollary \ref{cor:virtuallyfib} and work of Linnell--Schick \cite{LS07} we will prove the  following theorem:

\begin{theorem}\label{thm:pi1nfactors}
Let $N$ be an irreducible 3--manifold with empty or toroidal boundary. If $N$ is not a closed graph manifold, then $\pi_1(N)$ has the $\TTEEAA$-factorization property.
\end{theorem}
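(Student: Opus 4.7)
My plan is, given a surjection $\alpha\colon\pi:=\pi_1(N)\to G$ onto a finite group $G$, to construct a surjection $\beta\colon\pi\to\Gamma$ to some torsion-free elementary amenable group $\Gamma$ through which $\alpha$ factors. The broad strategy is to first reduce to a fibered finite-index normal subgroup via virtual fibering, then build a decreasing system of torsion-free elementary amenable quotients of that subgroup using the lower central series of the fiber, and finally to invoke Linnell--Schick~\cite{LS07} to pass from these quotients of $\pi^*$ to a $\TTEEAA$-quotient of $\pi$ itself.

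For the first two steps: the hypotheses on $N$ are preserved under finite covers, so the cover of $N$ corresponding to $\ker\alpha$ is itself irreducible with empty or toroidal boundary and is not a closed graph manifold. Applying Corollary~\ref{cor:virtuallyfib} to this cover and then taking the intersection of $\pi$-conjugates yields a finite-index normal subgroup $\pi^*\triangleleft\pi$ with $\pi^*\subseteq\ker\alpha$ and $\pi^*\cong\Sigma\rtimes\Z$, where $\Sigma$ is the fundamental group of a compact orientable fiber surface. By Magnus's theorem, $\Sigma$ is residually torsion-free nilpotent: the lower central series $\{\gamma_n(\Sigma)\}$ is characteristic in $\Sigma$ (hence normal in $\pi^*$), satisfies $\bigcap_n\gamma_n(\Sigma)=\{1\}$, and each $\Sigma/\gamma_n(\Sigma)$ is torsion-free nilpotent, so $\pi^*/\gamma_n(\Sigma)\cong(\Sigma/\gamma_n(\Sigma))\rtimes\Z$ is torsion-free polycyclic. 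Writing $F:=\pi/\pi^*$ and $K_n:=\bigcap_{f\in F}f\,\gamma_n(\Sigma)\,f^{-1}$ makes $K_n$ normal in $\pi$ while keeping $\pi^*/K_n$ torsion-free polycyclic (it embeds into a finite product of such groups), and one still has $\bigcap_n K_n=\{1\}$.

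The final and most substantial step is to promote these $\TTEEAA$-quotients of $\pi^*$ to a single $\TTEEAA$-quotient of $\pi$ through which $\alpha$ factors. The extension $1\to\pi^*/K_n\to\pi/K_n\to F\to 1$ shows $\pi/K_n$ is elementary amenable, but it typically contains torsion coming from $F$, so $\pi/K_n$ need not lie in $\TTEEAA$ directly. This is where Linnell--Schick~\cite{LS07} enters: combining the torsion-freeness of $\pi$ (a consequence of $N$ being aspherical, which holds under our hypotheses by Geometrization) with the residually torsion-free polycyclic structure on $\pi^*$ provided by the tower $\{K_n\}$ allows their methods to produce a surjection $\beta\colon\pi\to\Gamma$ onto a torsion-free elementary amenable group $\Gamma$ with $\ker\beta\subseteq\ker\alpha$. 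I expect the principal obstacle to lie exactly in this final step, where the torsion arising from the finite extension by $F$ must be resolved while preserving elementary amenability and compatibility with the map $\alpha$.
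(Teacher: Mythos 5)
Your overall strategy matches the paper's: reduce to a fibered finite-index normal subgroup via Corollary~\ref{cor:virtuallyfib}, equip that subgroup with torsion-free elementary amenable quotients using the surface group fiber, and then promote to a $\TTEEAA$-quotient of $\pi$ itself. Your intermediate construction differs from the paper's in an innocuous way: you build a residual tower $\{K_n\}$ from the lower central series of the fiber, whereas the paper's Proposition~\ref{thm:ext} produces, for a given $\alpha$, a single metabelian quotient $\pi^*/[R,R]$ with $R = \Sigma\cap\ker\alpha$, and Lemma~\ref{lem:tf} shows it is torsion-free. Either route establishes the needed structure on $\pi^*$.

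However, the final step is a genuine gap, not merely a hard corner. You need to show that some $\pi/K_n$ (or a further quotient) is torsion-free, and as you note the obstruction is the torsion of $F = \pi/\pi^*$. Residuality alone does not help: $\cap_n K_n = \{1\}$ gives no torsion-free quotient of $\pi$ in general (already $\pi=\Z$, $\pi^*=2\Z$, $K_n=2^n\Z$ shows the quotients $\pi/K_n$ can all be torsion). Saying ``Linnell--Schick's methods allow one to produce $\beta$'' is circular, because resolving exactly this tension is the content of the paper's Theorem~\ref{thm:extendfactorizingproperty}, a standalone theorem whose proof you do not reproduce and whose essential hypotheses you do not mention: one needs that $\pi$ is \emph{good} in the sense of Serre, has a \emph{finite-dimensional} classifying space, and then one runs a profinite-completion argument (via the lemmas in [Sc14]) showing that if no suitable $U$ existed, a nontrivial $p$-subgroup $Q'$ of $\pi/K$ would split in $1\to\hat K\to\hat\pi\to\pi/K\to 1$, contradicting the fact that $H^*(Q';\F_p)$ is nonzero in infinitely many degrees while $H^*(\hat L;\F_p)\cong H^*(L;\F_p)$ is supported in finitely many degrees for any finite-index $L\leq\pi$; a Sylow trick then reduces the general case to $p$-groups. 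Goodness of $\pi_1(N)$ (obtained via Serre's exercises from goodness of the fibered cover), asphericity of $N$, and the cohomological finiteness are thus not background facts but the engine of the argument, and without them the passage from $\pi^*$ to $\pi$ is unresolved.
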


The question of to what degree this statement holds for closed graph manifolds is discussed in Section~\ref{section:closedgraphmanifolds}. We postpone the proof of the theorem to Section \ref{section:proofpi1}, and point out several consequences, including the following, which was stated in the introduction.

\pinresg*

\begin{proof}
Let $\PP$ be any class of groups. If  a group $\pi$ is residually finite and has the $\PP$--factorization property, then $G$ is also residually $\PP$. The statement of the theorem now follows from Theorem \ref{thm:pi1nfactors} and the fact that 3--manifold groups are residually finite \cite{He87}.
\end{proof}

\begin{corollary}\label{cor:groupring}
Let $\pi$ be the fundamental group of an irreducible 3--manifold that has empty or toroidal boundary and is not a closed graph manifold. For every non-zero element $p\in \Z[\pi],$ there exists a homomorphism $\a\colon \pi\to \G\in \TTEEAA$ such that $0 \neq \a(p)\in \Z[\Gamma]$. 
\end{corollary}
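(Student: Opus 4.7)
The plan is to reduce the statement to the conclusion of Theorem~\ref{thm:pinresg} by a standard diagonalization: upgrade "residually $\TTEEAA$" to "fully residually $\TTEEAA$" using that $\TTEEAA$ is closed under finite direct products, and then arrange that the group elements appearing in the support of $p$ remain pairwise distinct under a single homomorphism.

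Concretely, I would write $p=\sum_{i=1}^n n_i g_i \in \Z[\pi]$ with the $g_i \in \pi$ pairwise distinct and with $n_i \in \Z \sm \{0\}$. Note that $\alpha(p) = \sum_i n_i \alpha(g_i)$ is nonzero in $\Z[\G]$ as soon as $\alpha(g_i) \ne \alpha(g_j)$ for all $i \ne j$, equivalently $\alpha(g_i g_j^{-1}) \ne 1$ whenever $i\ne j$. Since each $g_i g_j^{-1}$ (for $i\ne j$) is a non-trivial element of $\pi$, Theorem~\ref{thm:pinresg} provides, for each such pair, a homomorphism $\alpha_{ij} \co \pi \to \G_{ij}$ with $\G_{ij} \in \TTEEAA$ and $\alpha_{ij}(g_i g_j^{-1}) \ne 1$.

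Now I would take the diagonal homomorphism $\alpha = (\alpha_{ij}) \co \pi \to \G := \prod_{i<j} \G_{ij}$. The class $\TTEEAA$ is closed under finite direct products (the text explicitly remarks this, and it is clear: finite direct products of torsion-free groups are torsion-free, and finite direct products of elementary amenable groups are elementary amenable since the class is closed under extensions), so $\G \in \TTEEAA$. By construction, for every pair $i \ne j$ the $(i,j)$-coordinate of $\alpha(g_i g_j^{-1})$ is nontrivial, hence $\alpha(g_i) \ne \alpha(g_j)$ in $\G$, and therefore $\alpha(p) = \sum_i n_i \alpha(g_i) \ne 0$ in $\Z[\G]$.

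There is essentially no obstacle beyond invoking Theorem~\ref{thm:pinresg}; the only point worth checking carefully is that $\TTEEAA$ is closed under finite direct products, which I would state explicitly to make the argument self-contained. If desired one may replace $\G$ by the image $\alpha(\pi)$, which is also in $\TTEEAA$ (since elementary amenability and torsion-freeness are inherited by subgroups), but this refinement is not needed for the statement.
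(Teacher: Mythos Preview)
Your argument is correct and is essentially the same as the paper's: both write $p$ as a sum over pairwise distinct group elements, invoke Theorem~\ref{thm:pinresg}, and use closure of $\TTEEAA$ under finite direct products to pass from ``residually $\TTEEAA$'' to ``fully residually $\TTEEAA$'' so as to separate all the $g_ig_j^{-1}$ simultaneously. You merely spell out the diagonal homomorphism explicitly where the paper invokes the phrase ``fully residually''.
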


\begin{proof}
We write $p=\sum_{i=1}^ka_ig_i$, where $a_1,\dots,a_k\ne 0$ and $g_1,\dots,g_n\in \pi$ are pairwise distinct. By Theorem \ref{thm:pinresg} the group $\pi$ is residually $\TTEEAA$. Since $\TTEEAA$ is closed under taking finite direct products, $\pi$ is also fully residually $\TTEEAA$.
We can thus find a homomorphism $\a\colon \pi\to \G$ to a group $\G\in \TTEEAA$
such that all $\a(g_i)$ and all products $\a(g_ig_j^{-1}),$ $i\ne j,$ are non-trivial. Whence $\a(p)\in \Z[\G]$ is non-zero.
\end{proof}

\subsection{Proof of Theorem \ref{thm:pi1nfactors}}\label{section:proofpi1}
The following lemma is probably well-known to the experts.

\begin{lemma}\label{lem:tf}
Let $E$ be a surface group $($i.e.\ the fundamental group of a compact orientable surface$)$ and let $R\subset E$ be a normal subgroup. Then $E/[R,R]$ is torsion--free.
\end{lemma}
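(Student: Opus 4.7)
The plan is to argue by contradiction. Suppose $E/[R,R]$ contains a torsion element of order greater than $1$; by replacing it with a suitable power we may assume its order is a prime $p$. Write $\bar{g}$ for this element and $g\in E$ for a lift, so $g^p \in [R,R]$. The key input used throughout is that every subgroup of a surface group is again the fundamental group of a (possibly non-compact) surface, and hence has free abelian, in particular torsion-free, first homology. So both $R^{ab}$ and $E^{ab}$ are torsion-free.

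First I would dispose of the case $g\in R$. Then $g^p \in [R,R]$ yields $p[g]=0$ in $R^{ab}$, which by torsion-freeness forces $[g]=0$, i.e.\ $g\in [R,R]$; this contradicts $\bar{g}\ne 1$. So we may assume $g\notin R$. The next step is to reduce to the case $E/R=\Z/p$ by replacing $E$ with the subgroup $E_1 := \langle g\rangle R \le E$, which is indeed a subgroup because $R$ is normal. Since $g^p\in R$ while $g\notin R$ and $p$ is prime, the coset $gR$ has order exactly $p$ in $E/R$, so $E_1/R\cong \Z/p$. Since $E_1$ is still a subgroup of the original surface group, $E_1^{ab}$ is again torsion-free. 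Relabeling, we may assume $E/R=\Z/p$.

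With this reduction, the conclusion is immediate. Since $E/R$ is abelian, $[E,E]\subset R$, so $g\notin R$ implies $g\notin [E,E]$, i.e.\ $[g]\ne 0$ in $E^{ab}$. On the other hand $g^p\in [R,R]\subset [E,E]$ gives $p[g]=0$ in $E^{ab}$, and the torsion-freeness of $E^{ab}$ forces $[g]=0$, a contradiction. The only subtle point is the reduction step, which rests on the standard fact that subgroups of surface groups are fundamental groups of surfaces and therefore have torsion-free abelianization; no other obstacle is expected.
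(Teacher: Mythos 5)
Your proof is correct and is essentially the same as the paper's: both hinge on passing to the subgroup $S=\langle g\rangle R$ (your $E_1$), observing that $S/R$ being cyclic forces $[S,S]\subset R$, and then exploiting the fact that subgroups of surface groups are themselves surface or free groups and hence have torsion-free abelianization. The paper packages this as the short exact sequence $1\to [S,S]/[R,R]\to S/[R,R]\to S^{\mathrm{ab}}\to 1$ with torsion-free ends, while you run a proof by contradiction with a case split on $g\in R$ versus $g\notin R$; the latter case is a mild streamlining since you only invoke torsion-freeness of $E_1^{\mathrm{ab}}$ rather than both $R^{\mathrm{ab}}$ and $S^{\mathrm{ab}}$, but the underlying mechanism is identical.
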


\begin{proof}
Let $g\in E/[R,R]$ be a non--trivial element. We pick a representative for $g$ in $E$ which by slight abuse of notation we also denote by $g$.
We denote by $S$ the subgroup of $E$ generated by $g$ and $R$. It suffices to prove the following claim.

\begin{claim}
The group $S/[R,R]$ is torsion--free.
\end{claim}

 We  consider the following short exact sequence
\[ 1\to [S,S]/[R,R]\to S/[R,R]\to S/[S,S]\to 0.\]
Since $R$ and $S$ are either surface groups or free we deduce that $S/[S,S]=H_1(S;\Z)$ and $R/[R,R]=H_1(R;\Z)$ are torsion--free. 
The group  $S/R$ is generated by one element, which implies that $S/R$ is cyclic, in particular abelian. It follows that $[S,S]\subset R$. We thus see that  $[S,S]/[R,R]$ is a subgroup of $R/[R,R]$.
So the groups on the left and on the right of the above short exact sequence are torsion-free. It follows that $S/[R,R]$ is torsion--free. 
\end{proof}

 \begin{proposition}\label{thm:ext}
If $1 \to E \to \pi \to M \to 1$ is an exact sequence with $E$ a surface group and $M \in \TTEEAA$, then $\pi$ has the $\TTEEAA$-factorization property.
\end{proposition}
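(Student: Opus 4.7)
The plan is to produce, given any epimorphism $\a\co \pi\to G$ onto a finite group $G$, a factorization of $\a$ through an explicit quotient of $\pi$ that lies in $\TTEEAA$. The natural candidate is built from a normal subgroup simultaneously killed by $\a$ and contained in the surface-group factor: set $K := \ker(\a)$ and $R := K \cap E$. Both $K$ and $E$ are normal in $\pi$, so $R$ is too, and $R$ has finite index in the surface group $E$, so $R$ is itself a surface group. Since $R$ is normal in $\pi$, the commutator subgroup $[R,R]$ is characteristic in $R$ and hence normal in $\pi$; let $\b\co \pi\twoheadrightarrow \G := \pi/[R,R]$ be the canonical projection.

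The factorization of $\a$ through $\b$ is immediate: $[R,R]\subseteq R\subseteq K=\ker(\a)$, so $\a$ descends to a surjection $\G\to G$. It remains to verify $\G\in \TTEEAA$, which I would split into two checks.

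Elementary amenability of $\G$ follows from two successive extensions. Since $[R,R]\subseteq E$, there is a short exact sequence
\[ 1 \to E/[R,R] \to \G \to \pi/E = M \to 1,\]
and $E/[R,R]$ itself fits into
\[ 1 \to R/[R,R] \to E/[R,R] \to E/R \to 1.\]
The kernel $R/[R,R] = H_1(R;\Z)$ is abelian, the quotient $E/R$ embeds into $\pi/K=G$ and is therefore finite, and $M\in \TTEEAA$ by hypothesis; closure of $\EEAA$ under extensions then yields $\G\in \EEAA$.

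For torsion-freeness I would invoke Lemma \ref{lem:tf} applied to the surface group $E$ and its normal subgroup $R$, which gives that $E/[R,R]$ is torsion-free, while $M$ is torsion-free by hypothesis. An elementary argument then shows that in any extension $1\to K'\to \G\to Q\to 1$ with $K'$ and $Q$ both torsion-free the group $\G$ must itself be torsion-free: a non-trivial finite-order element of $\G$ would map to a finite-order element of $Q$, hence to the identity, and would then lie in $K'$, contradicting torsion-freeness of $K'$. Together with the previous paragraph this gives $\G\in \TTEEAA$. The only non-formal input in this plan is Lemma \ref{lem:tf}, which is where the surface-group hypothesis is actually exploited; everything else is a routine bookkeeping of closure properties of $\EEAA$ and of torsion-freeness.
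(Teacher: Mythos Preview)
Your proof is correct and follows essentially the same route as the paper: form $R=E\cap\ker\a$, pass to $\pi/[R,R]$, and verify membership in $\TTEEAA$ via the two short exact sequences together with Lemma~\ref{lem:tf}. The only differences are expository---you spell out why $[R,R]$ is normal in $\pi$ and why an extension of torsion-free groups is torsion-free---but the argument is the same.
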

\begin{proof}
 Let $\alpha\colon \pi \to P$ be a map to a finite group. 
Let $R = E \cap \ker \alpha$.
By Lemma \ref{lem:tf} the group $E / [R,R]$ is torsion-free.
Furthermore it is  elementary amenable by the exact sequence 
\[1 \rightarrow R/ [R,R] \rightarrow E / [R,R] \rightarrow E/R \rightarrow 1.\]
Now $\alpha$ factors through $\pi/ [R,R],$ and this is in $\TTEEAA$ due to the sequence 
\[1 \to E / [R,R] \to \pi/ [R,R] \to M \to 1.\hfill\qedhere\]
\end{proof}

The \emph{profinite completion} of the group $\pi$ is denoted $\widehat{\pi};$ see \cite[Section~3.2]{RZ10} for a definition and its main properties. 
Following Serre \cite[D.2.6~Exercise~2]{Se97} we say that a group $\pi$ is \emph{good} if the natural morphism $H^*(\widehat{\pi};A)\to H^*(\pi;A)$
is an isomorphism for any finite abelian group $A$ with a $\pi$-action. 

\begin{theorem}\label{thm:extendfactorizingproperty}
Let $\pi$ be a finitely generated  torsion-free group that has a finite-dimensional classifying space and which is good.
If $\pi$  admits a finite index subgroup $\G$ which has the $\TTEEAA$-factorization property, then $\pi$ also has the $\TTEEAA$-factorization property.
\end{theorem}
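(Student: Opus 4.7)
The plan is to reduce to a direct application of the Linnell--Schick machinery for torsion--free elementary amenable extensions of good groups.

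First, I would observe that the $\TTEEAA$--factorization property passes to finite--index subgroups. If $\Gamma' \leq \Gamma$ has finite index and $\rho\colon \Gamma' \to Q$ is an epimorphism onto a finite group, then the normal core $K$ of $\ker\rho$ in $\Gamma$ is a finite--index normal subgroup of $\Gamma$. Applying the $\TTEEAA$--factorization property of $\Gamma$ to the quotient $\Gamma \to \Gamma/K$ yields $\tau\colon \Gamma \to H$ with $H \in \TTEEAA$, and since $\ker(\tau|_{\Gamma'}) \subseteq K \subseteq \ker\rho$ the restriction $\tau|_{\Gamma'}\colon \Gamma' \to \tau(\Gamma')$ gives the required factorization of $\rho$; here $\tau(\Gamma')$ lies in $\TTEEAA$ as a subgroup of $H$, using that this class is closed under taking subgroups. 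Replacing $\Gamma$ with its normal core $\bigcap_{g\in\pi} g\Gamma g^{-1}$ in $\pi$, we may henceforth assume that $\Gamma \triangleleft \pi$ is normal of finite index.

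Given an epimorphism $\alpha\colon \pi \to P$ onto a finite group, I would next apply the factorization property of $\Gamma$ to $\alpha|_\Gamma\colon \Gamma \to \alpha(\Gamma)$ to produce $\beta_0\colon \Gamma \to G_0 \in \TTEEAA$ through which $\alpha|_\Gamma$ factors. Setting $K = \ker\beta_0 \triangleleft \Gamma$, the intersection $N := \bigcap_{g\Gamma \in \pi/\Gamma} g K g^{-1}$ is a well--defined normal subgroup of $\pi$ contained in $\ker\alpha$, and the diagonal map embeds $\Gamma/N$ into the finite product $G_0^{[\pi:\Gamma]}$, so $\Gamma/N \in \TTEEAA$ since $\TTEEAA$ is closed under taking subgroups and finite direct products. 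Consequently $\pi/N$ fits in a short exact sequence
\[
1 \to \Gamma/N \to \pi/N \to \pi/\Gamma \to 1
\]
with $\TTEEAA$--kernel and finite quotient, exhibiting $\pi/N$ as elementary amenable and virtually torsion--free.

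The main obstacle is that $\pi/N$ itself need not be torsion--free: even though $\pi$ is torsion--free, torsion can enter $\pi/N$ via elements whose image in $\pi/\Gamma$ is non--trivial. This is precisely the type of finite extension of a torsion--free elementary amenable group treated in Linnell--Schick~\cite{LS07}, and the hypotheses that $\pi$ is good in the sense of Serre and admits a finite--dimensional classifying space are what allow their methods to be applied: goodness controls the behavior of torsion under passage to the profinite completion, while the finite--dimensional $K(\pi,1)$ bounds the cohomological complexity of the extension and provides the inputs needed to remove the torsion. Following their scheme, I would construct a further normal subgroup $N' \triangleleft \pi$ with $N' \subseteq N$ (in particular $N' \subseteq \ker\alpha$) such that $\pi/N' \in \TTEEAA$; the epimorphism $\beta\colon \pi \to \pi/N'$ is then the desired factorization of $\alpha$. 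The reductions in the first two paragraphs are essentially formal, and the technical heart of the proof is this final torsion--elimination step, where the goodness and finite cohomological dimension of $\pi$ are truly used.
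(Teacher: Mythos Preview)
Your outline is essentially the paper's approach: pass to the normal core of $\Gamma$, note that the factorization property descends to finite--index subgroups, and then invoke Linnell--Schick--style torsion elimination using goodness and finite cohomological dimension. The intermediate construction of $N$ is not in the paper and is not really needed there, but it does no harm.

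Where the proposal falls short is in treating the torsion--elimination as a black box. There is no off--the--shelf statement in \cite{LS07} that accepts your extension $1 \to \Gamma/N \to \pi/N \to \pi/\Gamma \to 1$ and returns a refinement $\pi/N' \in \TTEEAA$ with $N' \subseteq N$; the paper must adapt the method. Concretely, it sets $K = \ker\alpha \cap \Gamma^\pi$, writes $Q = \pi/K$, and first handles the case that $Q$ is a $p$--group by contradiction: if no suitable $U \trianglelefteq \pi$ existed, then by \cite[Lemmas~6.7--6.9]{Sc14} a nontrivial subgroup of $Q$ would split off in the profinite sequence $1 \to \widehat{K} \to \widehat{\pi} \to Q \to 1$, and this is impossible because goodness gives $H^*(\widehat{L};\F_p) \cong H^*(L;\F_p)$, which vanishes above the dimension of the classifying space for every finite--index $L \leq \pi$, whereas a nontrivial finite $p$--group has $\F_p$--cohomology in infinitely many degrees. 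The general finite quotient is then reduced to the $p$--group case by running the argument over each Sylow subgroup of $Q$, intersecting the resulting kernels, passing to the normal core, and appealing to \cite[Lemma~4.11]{LS07} for torsion--freeness of the final quotient. This is the ``technical heart'' you correctly flag, and it is exactly where the hypotheses on $\pi$ are actually consumed; you have named the ingredients but not supplied the argument.
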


In the proof of the theorem we will on several occasions use the following standard notation: if $\G$ is a subgroup of $\pi$, then $\G^\pi:=\cap_{g\in \pi} g\Gamma g^{-1}$. Note that $\G^\pi$ is always a normal subgroup of $\pi$, and if $\G$ is of finite-index, then $\G^\pi$ is of finite-index. We also note that the methods of the proof build heavily on the work of Linnell and Schick \cite{LS07}.

\begin{proof}
Let $\a\colon \pi\to G$ be a homomorphism to a finite group. We denote by $K\subset \pi$ the intersection of $\ker(\a)$ and $\G^\pi$.
The subgroup $K$ is of finite index in $\pi$ and is clearly contained in $\Gamma$. It is straightforward to see that $K$ also has the $\TTEEAA$-factorization property. We write $Q:=\pi/K$. First suppose that $Q$ is a $p$-group. It suffices to show there is a subgroup $U \trianglelefteq \pi$ such that the map $\pi \rightarrow Q$ factors through $\pi/U$ and $\pi/U$ is in $\TTEEAA$.

If no such $U$ exists, then since $K$ has the $\TTEEAA$-factorization property, there is a nontrivial subgroup $Q'$ of $Q$ that splits in the induced sequence of profinite completions $$1 \to \hat K \rightarrow \hat \pi \rightarrow Q \rightarrow 1,$$ see \cite[Lemmas 6.7, 6.8, 6.9]{Sc14}. However, putting the following two observations together shows that this is not possible.
\begin{enumerate}
\item The cohomology $H^*(Q',\F_p)$ is nonzero in infinitely many dimensions.
\item By \cite[Exercise 1, 2.6]{Se97} any finite-index subgroup $L$ (such as $K$ or the preimage of $Q'$ under $\pi\to \Q$) of $\pi$ is also good and has a finite-dimensional classifying space. This implies that $H^*(\hat L,\F_p) \cong H^*(L,\F_p)$ is nonzero in only finitely many dimensions.
\end{enumerate}

For the general case, we use a trick from \cite{LS07}. For each Sylow $p$-subgroup $S$ of $Q$, consider the exact sequence $1\to K \to \pi_S\to S\to 1$, where $\pi_S$ is the preimage of $S$. By the above, we get a collection of subgroups $U_{\pi_S}$ such that the quotients $\pi_S/U_{\pi_S}$ are torsion-free elementary amenable. Let $U = \cap_S U_{\pi_S}$. Since $\pi/U^\pi$ is a finite extension of $\G/U^\pi$, elementary amenability follows again from \cite[Lemma~4.11]{LS07}. It remains to show that $\pi/U^\pi$ is torsion-free.

Suppose that  $\pi/U^\pi$ has a non-trivial torsion element $\gamma$. By raising $\gamma$ to some power we get an element $\gamma'$ that is $p$-torsion for some prime $p$. We have an exact sequence $$1 \to U_S^\pi/U^\pi \to \pi_S/U^\pi \to \pi_S/U_S^\pi \to 1$$ where $U_S^\pi/U^\pi$ and $\pi_S/U_S^\pi$ are torsion-free by Lemma 4.11 in \cite{LS07}. Since $K/U^\pi$ is torsion-free, $\gamma'$ would map to some Sylow $p$-subgroup, in which case $\gamma' \in \pi_S/U_S^\pi$, which is torsion-free. Therefore, $\pi/U^\pi$ is torsion-free. 
\end{proof}

Now we are finally in a position to prove Theorem~\ref{thm:pi1nfactors}.

\begin{proof}[Proof of Theorem~\ref{thm:pi1nfactors}]
Let $N$ be an irreducible 3--manifold that has empty or toroidal boundary and that is not a closed graph manifold. According to Corollary \ref{cor:virtuallyfib}, $N$ has a finite cover $M$ that is fibered. 
The fundamental group of $M$ is a semidirect product of $\Z$ with a surface group, and hence Lemma~\ref{lem:tf} and Proposition~\ref{thm:ext} imply $\pi_1(M)$ has  the $\TTEEAA$-factorization property.

It follows from  \cite[Exercise 2b) pg. 16]{Se97} that $\pi_1(M)$ is good.
By  \cite[Exercise 1 pg. 16]{Se97}  the group $\pi_1(N)$  is also good. 
It is well-known, see e.g.\ \cite[(A1)]{AFW15}, that $N$ is aspherical and that in particular $\pi_1(N)$ is torsion-free.
Thus we can apply Theorem~\ref{thm:extendfactorizingproperty} to $\pi_1(N)$ and the finite-index subgroup $\pi_1(M)$, giving the desired result that 
 $\pi_1(N)$ has  the $\TTEEAA$-factorization property.
\end{proof}

\subsection{The case of closed graph manifolds}\label{section:closedgraphmanifolds}
It is natural to ask for which closed graph manifolds the conclusions of  Theorem~\ref{thm:pi1nfactors} and its corollaries hold.
It follows from the work of Liu~\cite{Li13} that the conclusion of the theorem also holds for closed non-positively curved graph manifolds.
The question of which closed graph manifolds are non-positively curved was treated in detail by Buyalo and Svetlov~\cite{BS05}.
In the following we give a short list of examples of graph manifolds that are not non-positively curved:
\bn
\item spherical 3-manifolds,
\item Sol- and Nil-manifolds,
\item Seifert fibered 3-manifolds that are finitely covered by a non-trivial $S^1$-bundles over a closed surface.
\en
It is clear that the statements do not hold for spherical 3-manifolds with non-trivial fundamental group. The following lemma takes care of the second case.

\begin{lemma}
The fundamental groups of  Sol- and Nil-manifolds are $\TTEEAA$, in particular they have the
 $\TTEEAA$-factorization property.
\end{lemma}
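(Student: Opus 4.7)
The plan is to prove the stronger statement that the fundamental group $\pi$ of any closed Sol- or Nil-manifold lies in $\TTEEAA$ itself; the $\TTEEAA$-factorization property is then immediate, since for any epimorphism $\alpha\co\pi\to G$ to a finite group one can simply take $\beta=\id_\pi\co \pi\to\pi$ and factor $\alpha=\alpha\circ\beta$.

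For torsion-freeness, I would invoke the fact that closed Sol- and Nil-manifolds are aspherical: their universal covers are, respectively, the simply connected solvable Lie group $\mathrm{Sol}$ and the Heisenberg group $\mathrm{Nil}$, both diffeomorphic to $\R^3$. In particular every such manifold is a $K(\pi,1)$, so any non-trivial torsion element of $\pi$ would produce infinite-dimensional group cohomology, contradicting the finite-dimensionality of its classifying space. Hence $\pi$ is torsion-free.

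For elementary amenability, the key structural input is that every closed Nil-manifold is finitely covered by a quotient of $\mathrm{Nil}$ by a lattice, hence $\pi$ is virtually finitely generated nilpotent; and every closed Sol-manifold fits into a short exact sequence (at least after passing to a finite cover) of the form $1\to \Z^2\to \pi'\to \Z\to 1$ with the action of $\Z$ by a hyperbolic element of $\mathrm{SL}(2,\Z)$, so $\pi$ is virtually polycyclic. In either case $\pi$ is virtually polycyclic. Since the class of polycyclic groups is built from $\Z$'s by successive extensions, they lie in $\EEAA$, and $\EEAA$ is closed under finite extensions, so virtually polycyclic groups are elementary amenable.

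Combining the two observations yields $\pi\in\TTEEAA$, which finishes the proof. The only piece requiring care is a clean reference for the structure of fundamental groups of Sol- and Nil-manifolds (e.g.\ Scott's survey of the eight geometries, or the corresponding section of \cite{AFW15}); none of this should pose any real obstacle.
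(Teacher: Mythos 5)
Your proposal is correct and follows essentially the same route as the paper's proof: both deduce torsion-freeness from asphericity of Sol- and Nil-manifolds (universal cover $\R^3$), and both deduce elementary amenability from the fact that these manifolds are finitely covered by torus bundles over $S^1$, so that the fundamental group is virtually an extension of $\Z$ by $\Z^2$ and hence virtually polycyclic. The paper's version is terser but the structural input is the same, and your observation that a $\TTEEAA$ group trivially has the $\TTEEAA$-factorization property (take $\beta=\id_\pi$) matches the paper's "in particular."
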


\begin{proof}
Sol- and Nil-manifolds are finitely covered by torus-bundles over $S^1$. Hence their fundamental groups are elementary amenable, but the fundamental groups are also torsion-free, so they are $\TTEEAA$. 
\end{proof}

\begin{lemma}
Let $N$ be a Seifert fibered space with infinite fundamental group.  Then $\pi_1(N)$ has the  $\TTEEAA$-factorization property.
\end{lemma}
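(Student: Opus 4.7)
My plan is to reduce the problem to cases handled by Proposition~\ref{thm:ext} via passage to a finite cover, after verifying the hypotheses of Theorem~\ref{thm:extendfactorizingproperty}. First, if $\pi_1(N)$ is infinite but $N$ is not aspherical, then orientability forces $N = S^2 \times S^1$ so $\pi_1(N)\cong\Z\in\TTEEAA$ and the statement is immediate. Assuming $N$ aspherical, $\pi_1(N)$ is torsion-free, $N$ itself is a finite-dimensional $K(\pi_1(N),1),$ and $\pi_1(N)$ is good since $\Z$ and the base $2$-orbifold group are good and central extensions of good groups by good groups are good (via a standard Lyndon--Hochschild--Serre argument); thus Theorem~\ref{thm:extendfactorizingproperty} reduces the task to finding a finite-index subgroup with the $\TTEEAA$-factorization property.

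Passing to a finite regular cover, I may assume $N$ is an orientable $S^1$-bundle over an orientable surface $\Sigma.$ If $\partial\Sigma\neq\emptyset,$ then $H^2(\Sigma;\Z)=0$ so the bundle is trivial and $N=\Sigma\times S^1$ fibers over $S^1,$ giving $1\to\pi_1(\Sigma)\to\pi_1(N)\to\Z\to 1$ with surface-group kernel in the sense of Lemma~\ref{lem:tf}; Proposition~\ref{thm:ext} applies. The same argument handles closed $\Sigma$ with vanishing Euler class. If $\Sigma=S^2$ with nonzero Euler class then $\pi_1(N)$ is finite, contradicting infinitude. If $\Sigma=T^2$ with nonzero Euler class then $N$ is a Nil-manifold and $\pi_1(N)$ is torsion-free nilpotent, hence already lies in $\TTEEAA.$

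The remaining case is $\Sigma=\Sigma_g$ with $g\geq 2$ and nonzero Euler class $e,$ where $\pi_1(N)$ is a central extension $1\to\Z=\langle t\rangle\to\pi_1(N)\to\pi_1(\Sigma_g)\to 1$ of nonzero class; this is the crux of the argument. Such $N$ does not virtually fiber over $S^1$ (every map $\pi_1(N)\to\Z$ kills $t$ since $t$ is torsion in $H_1,$ and the kernel of the induced map $\pi_1(\Sigma_g)\to\Z$ is infinitely generated for $g\geq 2$), so Proposition~\ref{thm:ext} cannot be applied directly. My approach is to combine the $\TTEEAA$-factorization property of the surface quotient $\pi_1(\Sigma_g)$ (afforded by Proposition~\ref{thm:ext} with trivial outer quotient) with the observation that the $2$-step nilpotent quotient $\pi_1(N)/[\pi_1(N),[\pi_1(N),\pi_1(N)]]$ is torsion-free: the relator $\prod[a_i,b_i]=t^e$ forces $t^e$ to project to a nonzero element of $\Lambda^2 H_1(\pi_1(N))/\langle\omega\rangle,$ so $t$ retains infinite order in the nilpotent quotient, which is then torsion-free elementary amenable. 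Given $\alpha\colon\pi_1(N)\to P$ with $n=|\alpha(t)|,$ one lets $R\trianglelefteq\pi_1(\Sigma_g)$ be the kernel of the induced map $\pi_1(\Sigma_g)\to P/\alpha(\langle t\rangle),$ checks that $\alpha$ factors through $\pi_1(N)/[\widetilde R,\widetilde R]$ where $\widetilde R$ is the preimage of $R$ in $\pi_1(N)$ (since $\alpha(\widetilde R)\subset\alpha(\langle t\rangle)$ is central cyclic, hence abelian), and then attempts to mesh this with the nilpotent data to build a single torsion-free elementary amenable quotient of $\pi_1(N)$ refining $\alpha.$ The main obstacle is precisely this assembly step: every torsion-free elementary amenable quotient of $\pi_1(N)$ that retains the central element $t$ is forced to be genuinely non-abelian, because $t$ is torsion of order dividing $e$ in $H_1(\pi_1(N))=\Z^{2g}\oplus\Z/e,$ so one must orchestrate the surface-group commutator filtration with the central extension class carefully in order not to reintroduce torsion coming from the Euler number.
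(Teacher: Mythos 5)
Your reduction to the case of an $S^1$-bundle over a closed orientable surface of genus $g\geq 2$ with nonzero Euler number $e$ follows the paper's strategy (pass to a finite cover which is an $S^1$-bundle, invoke Theorem~\ref{thm:extendfactorizingproperty}), and your disposal of the easy cases --- $S^2\times S^1$, base with boundary or trivial Euler class, $S^2$ or $T^2$ base --- is correct. You also correctly isolate the crux: for $g\geq 2$ and $e\neq 0$ the class $[t]$ is torsion in $H_1(\pi_1(N))$, so naively abelianizing a finite-index subgroup reintroduces torsion. But you then explicitly stop short: you say the ``main obstacle is precisely this assembly step'' and describe what one would like to do without doing it. That is a genuine gap, and it sits exactly in the one case the lemma adds beyond Proposition~\ref{thm:ext} and the preceding Nil/Sol lemma.

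The paper's (self-described) sketch supplies the auxiliary object you are reaching for: a fiber-preserving map $N\to M$, where $M$ is the $S^1$-bundle over $T^2$ with the same Euler number $e$, yields a homomorphism $\phi\co\pi_1(N)\to\pi_1(M)$ to a torsion-free nilpotent (hence $\TTEEAA$) group which is \emph{injective on the fiber subgroup} $\langle t\rangle$. Your observation about the $2$-step nilpotent quotient $\pi_1(N)/\gamma_3$ is morally the same (that quotient surjects onto $\pi_1(M)$ and is also claimed to be injective on $\langle t\rangle$), so you have essentially identified the right tool; but it remains to combine $\phi$ with the $\TTEEAA$-factorization of $\pi_1(\Sigma_g)$ applied to the induced map $\pi_1(\Sigma_g)\to P/\alpha(\langle t\rangle)$ into a single $\TTEEAA$ quotient of $\pi_1(N)$ refining a given $\alpha\co\pi_1(N)\to P$, and you do not carry this out. (Incidentally, your justification that $t$ has infinite order in $\pi_1(N)/\gamma_3$ is not quite right as stated: in $\gamma_2/\gamma_3$ the relation forces $t^e$ to \emph{equal} $\omega=\sum a_i\wedge b_i$, so $t^e$ would project to zero, not to a nonzero element, of $\Lambda^2 H_1/\langle\omega\rangle$; the conclusion that $t$ has infinite order is plausible but needs a different justification.) To have a complete proof you would need to exhibit this final assembly; as written, the proposal proves the lemma only in the cases that were already covered by Proposition~\ref{thm:ext} and the Nil/Sol lemma.
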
 

\begin{proof} Since we will not make use of this lemma we only sketch the proof. The manifold $N$ is finitely covered by an $S^1$-bundle  over a surface. By Theorem~\ref{thm:extendfactorizingproperty} we can thus without loss of generality assume that $N$ is an $S^1$-bundle over a surface $F$.
Since $\pi_1({N})$ is infinite there exists a short exact sequence
\[ 1\to \ll t\rr \to \pi_1(N)\to \pi_1(F)\to 1,\]
where the subgroup $\ll t\rr$ is generated by the $S^1$-fiber.
By Proposition~\ref{thm:ext} the group $\pi_1(F)$ has the  $\TTEEAA$-factorization property. Denote by $e$  the Euler number of the $S^1$-bundle over $F$ and denote by $M$ the total space of the $S^1$-bundle over the torus with Euler number $e$. Then there exists a fiber-preserving map from ${N}$ to $M$. Since $\pi_1(M)$ is $\TTEEAA$ we found a homomorphism from $\pi_1(N)$ to a $\TTEEAA$ group which is injective on $\ll t\rr$. Now it is straightforward to see that $\pi_1(N)$ has the  $\TTEEAA$-factorization property.
\end{proof}

The above discussion shows that the fundamental groups of many closed graph manifolds have the $\TTEEAA$-factorization property. Nonetheless we expect that there are many closed graph manifolds whose fundamental groups do not have the $\TTEEAA$-factorization property.

\section{Proof of Theorem  \ref{mainthm} (I)}
\label{section:thurstonsmaller}

The goal of this section is to prove the following proposition.

\begin{proposition}\label{mainprop1}
Let  $\pi=\langle x,y\,|\, r\rangle$  be a cyclically reduced $(2,1)$--presentation for the fundamental group of an irreducible  3--manifold $N$.  Then 
\[ \mathcal{P}_N\leq \mathcal{P}_\pi.\]
\end{proposition}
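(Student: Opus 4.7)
The plan is to use the main theorem of \cite{FV12}, which (relying crucially on the Virtually Special Theorem, Theorem~\ref{thm:virtuallyrfrs}) asserts that the Thurston norm of $N$ is detected by twisted Reidemeister torsions over all finite-dimensional complex representations of $\pi = \pi_1(N)$; schematically,
\[
x_N(\phi) \;=\; \sup_{\alpha\colon\pi\to \gl_k(\C)} \tfrac{1}{k}\deg_\phi \tau^{\alpha}(N,\phi).
\]
Since $\PP_N$ is symmetric and its thickness in the $\phi$-direction equals $x_N(\phi),$ it will be enough to show that for every finite-dimensional complex representation $\alpha$ and every $\phi\in H^1(N;\R)$ one has $\deg_\phi \tau^{\alpha}(N,\phi) \le k\cdot \th_{\PP_\pi}(\phi)$. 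Taking the supremum over $\alpha$ then gives $\th_{\PP_N}(\phi) \le \th_{\PP_\pi}(\phi)$ for every $\phi,$ and the symmetry of $\PP_N$ will promote this thickness comparison to the polytope containment $\PP_N \le \PP_\pi$.

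To carry out the estimate, I would compute the twisted Reidemeister torsion from the cellular chain complex of the universal cover of the $2$--complex $X$ associated to $\pi = \ll x,y\mid r\rr$. This complex has the form
\[
0\to \Z[\pi]\xrightarrow{(r_x,\,r_y)^{\top}} \Z[\pi]^{2}\xrightarrow{(x-1,\,y-1)} \Z[\pi]\to 0,
\]
with boundary maps given by the Fox derivatives. Since $N$ is aspherical with toroidal boundary (Theorem~\ref{thm:ep61}) and $\pi_1(N)\cong\pi,$ a standard argument shows that $X$ is simple homotopy equivalent to $N,$ so the Reidemeister torsion of $N$ agrees with that computed from $\tilde X$. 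For the twisted representation $\alpha\otimes\phi\colon\pi\to \gl_k(\C[t^{\pm 1}])$ defined by $g\mapsto t^{\phi(g)}\alpha(g),$ the torsion is, up to units, a ratio of determinants of the twisted boundary matrices, so
\[
\deg_\phi \tau^{\alpha}(N,\phi) \;\le\; \deg_t\det (\alpha\otimes\phi)(r_y) \;-\; \deg_t\det (\alpha\otimes\phi)(x-1),
\]
or the analogous estimate with the roles of $x$ and $y$ interchanged.

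The final ingredient is the elementary observation that for any $f\in\Z[\pi]$ and any $k$--dimensional $\alpha,$ the $t$--exponents occurring in the entries of $(\alpha\otimes\phi)(f)$ are exactly the values $\phi(g)$ for $g$ in the support of $f,$ whence $\deg_t\det(\alpha\otimes\phi)(f) \le k\cdot\th_{\PP(f)}(\phi).$ Combining this with Proposition~\ref{prop:foxpolytope}, which identifies $\PP_\pi+\PP(x-1)\doteq\PP(r_y)$ (and the symmetric statement with $x,y$ swapped), together with additivity of thickness under Minkowski sums, yields
\[
\deg_\phi \tau^{\alpha}(N,\phi) \;\le\; k\bigl(\th_{\PP(r_y)}(\phi)-\th_{\PP(x-1)}(\phi)\bigr) \;=\; k\cdot \th_{\PP_\pi}(\phi),
\]
and passing to the supremum over $\alpha$ completes the argument. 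The case $b_1(\pi)=1$ reduces to the same computation after first replacing $\pi$ by its associated simple $(2,1)$--presentation from Section~\ref{section:pipolytope}, so that only one Fox derivative enters. The hard part is handling representations $\alpha$ for which the twisted chain complex fails to be acyclic and the torsion $\tau^{\alpha}(N,\phi)$ is not directly defined; it is precisely at this delicate point that the full force of \cite{FV12}, and hence of the Virtually Special Theorem, is required.
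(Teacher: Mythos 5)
Your strategy is the right one in spirit (it uses the same ingredients: \cite{FV12}, the Kitano formula for $\tau$ from the two-complex, and Lemma~\ref{lem:detcontained}), but there is a genuine gap at the final step where you pass from a thickness inequality to a polytope containment.

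You establish $\th_{\PP_N}(\phi)=x_N(\phi)\le \th_{\PP_\pi}(\phi)$ for all $\phi$, then assert that "the symmetry of $\PP_N$ will promote this thickness comparison to the polytope containment $\PP_N\le\PP_\pi$." This is false in general. A uniform bound $\th_{\PP_N}(\phi)\le\th_{\PP_\pi}(\phi)$ for all $\phi$ is equivalent to $\PP_N^{\sym}\subset\PP_\pi^{\sym}$, i.e.\ to $\PP_N\le\PP_\pi^{\sym}$. But at this stage of the argument $\PP_\pi$ is not known to be symmetric, and in general $\PP_\pi^{\sym}\not\le\PP_\pi$: for instance an equilateral triangle of minimal width $w$ contains no translate of a disk of diameter $w$, even though the disk has smaller thickness in every direction. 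So symmetry of $\PP_N$ alone is not enough to convert the seminorm inequality into a containment.

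The paper avoids this by producing an \emph{actual polytope equality} $\TT(N,\a)\doteq\PP_N$ for a single well-chosen \emph{unitary} representation $\a$, using two extra inputs: (1) the symmetry result of \cite{FKK12}, which says that for unitary $\a$ the torsion polytope $\TT(N,\a)$ is itself symmetric up to translation, and (2) the realization result \cite[Corollary~5.10]{FV12} giving a unitary $\a$ whose torsion thickness equals $x_N$ in every direction; two symmetric-up-to-translation polytopes with the same thickness in every direction coincide up to translation. Combined with the genuine polytope containment $\TT(N,\a)\subset\PP_\pi$ from Proposition~\ref{prop:tapcontainedinmpi} (which is what you should aim for in place of a mere degree bound), this gives $\PP_N\doteq\TT(N,\a)\le\PP_\pi$. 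In short: you need the unitary symmetry of the torsion polytope, not just the symmetry of $\PP_N$, to upgrade the degree estimate into the claimed containment. Also, as a minor point, the crucial input from \cite{FV12} is the \emph{detection} of the Thurston norm by some representation, not the treatment of non-acyclic complexes (for those one simply sets $\tau=0$, which gives a vacuous inequality).
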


The main ingredient in the proof will be the fact that twisted Reidemeister torsions  corresponding to finite-dimensional complex representations detect the Thurston norm of 3--manifolds.

\subsection{Tensor representations}
Let $\pi$ be a group, let  $\a \co \pi\to \gl(k,\C)$ be a  representation and let  $\psi\co \pi\to H$ be a homomorphism to a free abelian group.
We denote by $\C(H)$ the quotient field of the group ring $\C[H]$. 
The homomorphisms  $\a$ and $\psi$ give rise to the representation
\[ \begin{array}{rcl} \a \otimes \psi \co \pi &\to & \gl(k,\C(H)) \\
g&\mapsto & \a(g)\cdot \psi(k)\ea \]
that we refer to as the tensor product of $\a$ and $\psi$.
This representation  extends to a ring homomorphism $\Z[\pi]\to M(k\times k,\C(H)),$ which we also denote by $\a\otimes \psi$.

\subsection{The definition of the twisted Reidemeister torsion}\label{section:twitorsion}

Let $X$ be a finite $CW$--complex, $\pi:=\pi_1(X)$, and denote by $\ti{X}$ the universal cover of $X$. The action of $\pi$ via  deck transformations  on $\ti{X}$ equips the chain complex $C_*(\ti{X};\Z)$ with the structure of a chain complex  of  $\Z[\pi]$--left modules.

Let  $\a \co \pi\to \gl(k,\C)$ be a  representation. We denote by  $\psi\co \pi\to H:=H_1(X;\Z)/\mbox{torsion}$  the obvious projection map. 
Using the representation $\a\otimes \psi$ we can now view $\C(H)^k$ as a right $\Z[\pi]$-module,
where the action is given by right multiplication on row-vectors.

We  consider  the  chain complex
\[ C_*(X;\C(H)^k):=\C(H)^k \otimes_{\Z[\pi]} C_*(\ti{X};\Z) \]
of $\C(H)$--modules. For each cell in $X$ pick a lift to a cell in $\wti{X}$. 
We denote by $e_1,\dots,e_k$ the standard basis for $\C(H)^k$. The tensor products of the lifts of the cells and the vectors
$e_i$ turns $C_*(X;\C(H)^k)$ into a chain complex of based $\C(H)$--vector spaces.

If the chain complex $C_*(X;\C(H)^k)$ is not acyclic, then we define the corresponding twisted Reidemeister torsion $\tau(X,\a)$ to be zero. Otherwise we denote by $\tau(X,\a)\in \C(H)\sm \{0\}$ the torsion of the based chain complex  $C_*(X;\C(H)^k)$.
We refer to \cite{Tu01} for the definition of the torsion of a based chain complex.
Standard arguments show that $\tau(X,\a)\in \C(H)\sm \{0\}$ is well-defined up to multiplication
by an element of the form $zh,$ where $z\in \pm \det(\a(\pi))$ and $h\in H$.
The indeterminacy arises from the fact that we had to choose lifts and an ordering of the cells.

Suppose $N$ is a 3--manifold and let $\a \co \pi_1(N)\to \gl(k,\C)$ be  a  representation.
Choose a $CW$--structure $X$ for $N$ and define $\tau(N,\a):=\tau(X,\a)$. It is well-known, see e.g.\ \cite{Tu01,FV10}, that this definition does not depend on the choice of the $CW$--structure.

\subsection{The polytopes corresponding to twisted Reidemeister torsion} 
As above, suppose $N$ is a 3--manifold and $\a \co \pi_1(N)\to \gl(k,\C)$ a  representation.
If $\tau(N,\a)$ is zero, then we define $\TT(N,\a)=\emptyset$. 

Otherwise we write $\tau(N,\a)=p\cdot q^{-1}$ with $p,q\in \C[H]$. 
If the Minkowski difference $\PP(q)-\PP(p)$ exists (and by \cite[p.~53]{FV10} this is the case if $b_1(N)\geq 2$), then we define
\[ \TT(N,\a):=\frac{1}{k}\cdot (\PP(p)-\PP(q)),\]
and otherwise define $\TT(N,\a):=\{0\}$.

\begin{proposition}\label{prop:tapcontainedinmpi}
Let $\pi=\ll x,y\, |r\rr $ be a $(2,1)$--presentation  for the fundamental group of an irreducible 3--manifold $N$. Then for any  representation
we have 
\[ \TT(N,\a)\subset \PP_\pi.\]
\end{proposition}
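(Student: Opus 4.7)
The plan is to compute $\tau(N,\alpha)$ via the 2-complex $X=X_\pi$ of the presentation, then combine a Newton-polytope bound for twisted determinants with a Minkowski cancellation argument. By Theorem~\ref{thm:ep61}, $N$ has toroidal boundary; since $\pi$ is then a deficiency-1 presentation of $\pi_1(N)$, the complex $X$ is simple homotopy equivalent to $N$, so $\tau(N,\alpha)=\tau(X,\alpha)$. The cellular chain complex of the universal cover $\widetilde X$ reads
\[
0\to\Z[\pi]\xrightarrow{(r_x,\,r_y)}\Z[\pi]^2\xrightarrow{\binom{x-1}{y-1}}\Z[\pi]\to 0
\]
with boundary maps given by the Fox derivatives. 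Either the tensored complex (over $\C(H)^k$ via $\alpha\otimes\psi$) fails to be acyclic, in which case $\tau=0$ and the statement is trivial, or it is acyclic and a standard computation yields
\[
\tau(N,\alpha)\;\doteq\;\frac{p}{q},\qquad p:=\det\bigl((\alpha\otimes\psi)(r_x)\bigr),\ q:=\det\bigl((\alpha\otimes\psi)(y-1)\bigr),
\]
up to units, when $b_1(\pi)=2$; the case $b_1(\pi)=1$ is handled by the analogous formula involving $r_y$ and $x-1$ after passing to a simple presentation.

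The next step is to identify the Newton polytopes. For the denominator, since $\alpha(y)\in\gl(k,\C)$ is invertible with eigenvalues $\mu_1,\dots,\mu_k$, direct expansion gives
\[
\det\bigl(\alpha(y)\psi(y)-I\bigr)=\det(\alpha(y))\prod_{i=1}^k\bigl(\psi(y)-\mu_i^{-1}\bigr),
\]
a polynomial in the lattice variable $\psi(y)$ of degree exactly $k$ with nonzero leading and constant coefficients. Hence $\PP(q)=k\cdot\PP(y-1)$ as literal sets (the case $\psi(y)=0$ in the $b_1(\pi)=1$ simple setting is immediate, giving $\PP(q)=\{0\}=k\cdot\PP(y-1)$). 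For the numerator, I would invoke the general inequality
\[
\PP\bigl(\det((\alpha\otimes\psi)(f))\bigr)\;\subset\;k\cdot\PP(f)\qquad\text{for every } f\in\Z[\pi],
\]
which follows because each entry of $(\alpha\otimes\psi)(f)$ is a $\C$-linear combination of monomials $\psi(g)$ with $g$ in the support of $f$, hence has polytope inside $\PP(f)$; the determinant is a signed sum of $k$-fold products of entries, one from each row and column, so its polytope lies in the $k$-fold Minkowski sum. Applying this to $f=r_x$ and using Proposition~\ref{prop:foxpolytope} to write $\PP(r_x)\doteq\PP_\pi+\PP(y-1)$, I obtain
\[
\PP(p)\;\subset\;k\PP(r_x)\;\doteq\;k\PP_\pi+k\cdot\PP(y-1)\;=\;k\PP_\pi+\PP(q).
\]

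The proof concludes by Minkowski cancellation. For any $v$ with $v+\PP(q)\subset\PP(p)$ (i.e., $v$ lying in the Minkowski difference $\PP(p)-\PP(q)$, when it exists), the above inclusion gives $v+\PP(q)\subset k\PP_\pi+\PP(q)$; applying additivity of support functions under Minkowski sum direction by direction forces $v\in k\PP_\pi$. Therefore $\PP(p)-\PP(q)\subset k\PP_\pi$, and dividing by $k$ yields $\TT(N,\alpha)\subset\PP_\pi$. When the Minkowski difference does not exist the definition makes $\TT(N,\alpha)=\{0\}$, which is contained in $\PP_\pi$ up to translation; the case $\tau=0$ makes $\TT(N,\alpha)=\emptyset$, and the conclusion is again trivial.

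The main obstacle, as I see it, is the exact identification $\PP(q)=k\cdot\PP(y-1)$: the general Newton-polytope inequality only yields containment, but here equality is needed so that $\PP(q)$ functions as a Minkowski summand permitting cancellation. This is precisely where the invertibility of $\alpha(y)\in\gl(k,\C)$ is indispensable. Beyond this, the main bookkeeping is tracking the translations ($\doteq$) consistently throughout and handling the $b_1(\pi)=1$ case via passage to the simple presentation.
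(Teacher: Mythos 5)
Your proposal is correct and follows essentially the same route as the paper: the Kitano/Wada Fox-calculus formula for the torsion, the Newton-polytope bound of Lemma~\ref{lem:detcontained} for the numerator, the exact identity $\PP(q)=k\,\PP(y-1)$ (crucially using invertibility of $\a(y)$), and Minkowski cancellation via Proposition~\ref{prop:foxpolytope}. You even quietly use the correct pairing $r_x$ with $y-1$ where the paper's displayed formula (and final line $\PP(r_y)-\PP(y-1)=\PP_\pi$) has an evident typo; with $\psi(y)\ne 0$ the formula should indeed read $\tau(N,\a)=\det((\a\otimes\psi)(r_x))\cdot\det((\a\otimes\psi)(y-1))^{-1}$ and the last equality should be $\PP(r_x)-\PP(y-1)=\PP_\pi$.
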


In the proof of the proposition we will need one more definition and one more lemma.
Let $\pi$ be a group, $f\in \Z[\pi],$ $\a\co \pi\to  \gl(k,\C)$ be a representation, and $\psi_\pi\co \pi\to H:=H_1(\pi;\Z)/\mbox{torsion}$ be the canonical epimorphism.  Then $\det((\a\otimes \psi_\pi)(f))\in\C[H]$ and we write
\[ \PP(f,\a):=\frac{1}{k} \PP(\det((\a\otimes \psi_\pi)(f)))\subset H_1(\pi;\R).\]

\begin{lemma}\label{lem:detcontained}
Let $\pi$ be a group, $f\in \Z[\pi]$ and $\a\co \pi\to  \gl(k,\C)$ be  a  representation. Then
\[\PP(f,\a)\subset \PP(f).\]
\end{lemma}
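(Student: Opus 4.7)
My plan is to inspect $\det((\alpha\otimes\psi_\pi)(f))$ directly. Write $f=\sum_g f_g\,g$ and set $M:=(\alpha\otimes\psi_\pi)(f)=\sum_g f_g\,\alpha(g)\,\psi_\pi(g)$. The $(i,j)$-entry of $M$ lies in $\C[H]$ and, by construction, its support in $H$ is contained in $\psi_\pi(\op{supp}(f))$. The Leibniz expansion writes $\det M$ as an alternating sum, over permutations $\sigma$ of $\{1,\dots,k\}$, of products $\prod_{i=1}^{k} M_{i,\sigma(i)}$. Multiplying out each such product turns $\det M$ into a $\C$-linear combination of group-ring monomials of the form $\psi_\pi(g_1)\cdots\psi_\pi(g_k)$ with each $g_i\in\op{supp}(f)$.

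Translating to additive notation in $H$, every element of the support of $\det M$ is therefore a sum of exactly $k$ elements of $\psi_\pi(\op{supp}(f))$. Taking convex hulls gives
\[ \PP(\det M)\;\subset\;\underbrace{\PP(f)+\cdots+\PP(f)}_{k\text{ summands}}\;=\;k\cdot\PP(f),\]
where the containment is the always-true Minkowski inclusion in the commutative group ring $\C[H]$ (applied $k-1$ times; compare the equality statement in part (1) of Lemma~\ref{lem:productadd}). Dividing by $k$ yields precisely $\PP(f,\alpha)=\tfrac{1}{k}\PP(\det M)\subset\PP(f)$, as required. There is no real obstacle here: the only conceptual point is the passage between multiplicative notation in $\C[H]$ and additive notation in $H_1(\pi;\R)$, after which the containment is immediate from the fact that each of the $k$ factors in a determinant term has polytope contained in $\PP(f)$.
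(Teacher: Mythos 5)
Your argument is correct and is essentially the same as the paper's: both expand $\det M$ as a signed sum of $k$-fold products of matrix entries (you via the Leibniz/permutation expansion, the paper via iterated Laplace expansion), note that each entry is supported on $\psi_\pi(\op{supp}(f))$ so each $k$-fold product has Newton polytope inside $k\cdot\PP(f)$, observe that summing cannot enlarge the polytope, and divide by $k$. The two expansions of the determinant are interchangeable here, and the rest of the reasoning matches step for step.
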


\begin{proof}
We write $f=c_1h_1+\dots+c_lh_l$ with $h_1,\dots,h_l\in \pi$ and $c_1,\dots,c_l\ne 0$.
We consider
\[ S:=\{ s_1\psi(g_1)+\dots+s_l\psi(h_l)\,|\, s_1,\dots,s_l\in \C\}.\]
Put differently, $S$ is the set of all elements in $\C[H]$ with support a subset of $\{\psi(g_1),\dots,\psi(g_l)\}$. For every $p\in S$ we have $\PP(p)\subset \PP(\psi(g_1),\dots,\psi(g_l))= \PP(f)$. 
This implies that if $p_1,\dots,p_k$ are elements in $S$, then \[\PP(p_1\cdots\dots \cdot p_k)=\PP(p_1)+\dots+\PP(p_k)\subset \PP(f)+\dots+\PP(f)=k\PP(f).\]

We write $M:= (\a\otimes \psi)(f)=\sum_{i=1}^l c_i\a(h_i)\cdot \psi(h_i)$. 
Each entry of  $\det(M)$ lies in $S$.
It follows from the Laplace formula that $\det(M)$ is a sum of products of the form $p_1\cdot \dots \cdot p_k,$ where each $p_i$ lies in $S$.
By the above we have $\PP(p_1\cdots\dots \cdot p_k)\subset k\PP(f)$.
The definitions imply that if $a,b\in \C[\pi]$ are such that $\PP(a)$ and $\PP(b)$ are contained in a polytope $\QQ$, then we have also have $\PP(a+b)\subset \QQ$. Hence $\PP(\det(M))\subset k\PP$.
\end{proof}

\begin{proof}[Proof of Proposition \ref{prop:tapcontainedinmpi}]
We again denote by $\psi\co \pi_1(N)\to H_1(N;\Z)/\mbox{torsion}$ the canonical epimorphism.  Note that $\psi(x)\ne 0$ or $\psi(y)\ne 0$. Without loss of generality we may assume $\psi(y)\ne 0$. 

Theorem~\ref{thm:ep61} shows that $N$ has non-trivial toroidal boundary.
It thus follows from \cite[Theorem~A]{Ki96}, see also \cite[p.~50]{FV10},
that 
\[ \tau(N,\a)=\det((\a\otimes \psi)(r_y))\cdot \det((\a\otimes \psi)(y-1))^{-1}.\]
By  Lemma \ref{lem:detcontained} we have $\PP(r_y,\a)\subset \PP(r_y)$. Since $\psi(y)\ne 0$ we know that $\psi(y)$ and $1$ are the two, distinct, vertices of $\PP(y-1)$. Also, we have $\PP(y-1,\a)=\frac{1}{k}\PP(\det(\a(y)\psi(y)-\id_k))$ and it is straightforward to see that this polytope equals $\PP(y-1)$. 

Combining these results we obtain 
\[ \TT(N,\a)=\PP(r_y,\a)-\PP(y-1,\a)=\PP(r_y,\a)-\PP(y-1)\subset \PP(r_y)-\PP(y-1)=\PP_\pi.\qedhere\]
\end{proof}

\subsection{The proof of Proposition \ref{mainprop1}}
Proposition \ref{mainprop1} is  an immediate consequence of Theorem \ref{thm:ep61},
Proposition \ref{prop:tapcontainedinmpi} and the second statement of the following proposition.

\begin{proposition}\label{prop:tapdetecttn}
Let $N$ be a 3--manifold with empty or toroidal boundary and $\a \co \pi_1(N)\to U(k,\C)$ be  a unitary representation.
Then
\[ \TT(N,\a)\leq \PP_N.\]
Furthermore, if $N$ is  irreducible, then there exists a unitary representation  $\a \co \pi_1(N)\to U(k,\C)$
such that  
\[ \TT(N,\a)\doteq \PP_N.\]
\end{proposition}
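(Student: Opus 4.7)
The plan is to treat the two statements separately: the upper bound $\TT(N,\a) \leq \PP_N$ is a general fact about unitary twisted Reidemeister torsion that holds for arbitrary $N,$ while the realization statement is exactly where the Virtually Special Theorem is needed.

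For the first statement, I would appeal to the classical degree estimates for twisted Reidemeister torsion (equivalently, the multivariable twisted Alexander polynomial). Specifically, for any unitary representation $\a\co \pi_1(N)\to U(k,\C)$ and any $\phi\in H^1(N;\R),$ it is known (see~\cite{FV10}) that
\[ \deg_\phi \tau(N,\a) \leq k \cdot x_N(\phi).\]
By the definition of $\TT(N,\a)$ (which is $\tfrac{1}{k}(\PP(p)-\PP(q))$ for $\tau(N,\a)=p\cdot q^{-1}$) the left-hand side, divided by $k,$ equals the thickness $\th_{\TT(N,\a)}(\phi).$ The right-hand side, also divided by $k,$ equals $x_N(\phi)=\th_{\PP_N}(\phi)$ by definition of the dual Thurston polytope. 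Since this inequality of thicknesses holds for every $\phi,$ a standard convex geometry argument (cf. Section~\ref{section:convex}) gives the polytope inclusion $\TT(N,\a)\leq \PP_N.$

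For the realization statement, I would invoke the main theorem of Friedl--Vidussi~\cite{FV12}, which asserts that when $N$ is irreducible with empty or toroidal boundary, twisted Reidemeister torsion corresponding to finite-dimensional complex representations detects the Thurston norm. Concretely, this result produces, for each integral $\phi\in H^1(N;\Z),$ a unitary representation (typically factoring through a finite quotient of $\pi_1(N)$) whose twisted torsion attains the bound $\deg_\phi \tau(N,\a)=k\cdot x_N(\phi).$ The proof of \cite{FV12} uses the Virtually Special Theorem (Theorem~\ref{thm:virtuallyrfrs}) via Agol's virtual fibering criterion to find finite covers on which the relevant class becomes quasi-fibered, where torsions are easier to compute.

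The main obstacle will be to upgrade this pointwise (per-$\phi$) detection into the existence of a \emph{single} representation $\a$ whose polytope $\TT(N,\a)$ equals $\PP_N$ up to translation. The plan for this step is as follows: since $\PP_N$ has finitely many vertices, one can choose finitely many directions $\phi_1,\dots,\phi_m$ (e.g.\ one per face of $\PP_N$) and corresponding unitary representations $\a_1,\dots,\a_m$ detecting $x_N(\phi_i)$ in each direction. Taking the direct sum $\a=\a_1\oplus\dots\oplus\a_m$ gives a unitary representation whose torsion is the product of the individual torsions; on the polytope side, and after normalizing by the total dimension, $\TT(N,\a)$ becomes a weighted Minkowski combination of the $\TT(N,\a_i).$ Each summand satisfies $\TT(N,\a_i)\leq \PP_N,$ and the face/direction data forces the combined polytope to saturate the bound in every direction. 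Combined with the first statement, this yields $\TT(N,\a)\doteq \PP_N.$ The closed graph manifold case, not covered by the Virtually Special Theorem as stated, is handled in \cite{FV12} by separate direct arguments.
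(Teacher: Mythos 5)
The main gap is in your argument for the first statement. You deduce from the thickness inequality $\th_{\TT(N,\a)}(\phi)\leq x_N(\phi)=\th_{\PP_N}(\phi)$ for all $\phi$ that ``a standard convex geometry argument gives the polytope inclusion $\TT(N,\a)\leq\PP_N$.'' This step is false in general: an inequality of thickness functions only controls the \emph{symmetrization} of a convex body, so what you actually get is $\TT(N,\a)^{\sym}\leq\PP_N$. An asymmetric polytope $\PP$ generally cannot be translated into its own symmetrization $\PP^{\sym}$ (for example, an equilateral triangle does not fit inside the hexagon $\PP^{\sym}$), so the unsymmetrized inclusion requires a genuinely new input. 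The paper closes this gap with the duality result of Friedl--Kim--Kitayama \cite[Theorem~1.2]{FKK12}, which says that for a \emph{unitary} representation one has $\TT(N,\a)^{\sym}\doteq\TT(N,\a)$; this is exactly where the unitarity hypothesis is used, and your proposal does not invoke it. Without this, the conclusion $\TT(N,\a)\leq\PP_N$ does not follow.

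For the second statement your strategy is broadly sound: the paper simply cites \cite[Corollary~5.10]{FV12} for a single unitary representation computing the norm in all directions simultaneously, and handles the closed graph manifold case by citing \cite{FN15} (not by ``separate direct arguments in \cite{FV12}''). Your proposed re-derivation by direct sums has a soft spot: the polytope of a direct sum is the dimension-weighted Minkowski \emph{average} of the summands' polytopes, so even if each $\TT(N,\a_i)$ attains $x_N(\phi_i)$ in its own direction $\phi_i$, the average can fall strictly short of $x_N(\phi_j)$ in direction $\phi_j$ because the other summands may contribute too little thickness there. So the claim that ``the face/direction data forces the combined polytope to saturate the bound in every direction'' would need a real argument; for the purposes of this proposition it is cleaner (and correct) to rely on the cited result.
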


\begin{proof}
Let $N$ be a 3--manifold with empty or toroidal boundary. We write $\pi=\pi_1(N)$. Let  $\a \co \pi\to U(k,\C)$ be  a unitary representation. 
If $\tau(N,\a)=0$, then there is nothing to show. So suppose that $\tau(N,\a)\ne 0$.
In \cite[Theorem~1.1]{FKm06} and \cite[Theorem~3.1]{FKm08} it was shown that for any $\phi\in H^1(N;\R)=\hom(\pi,\R)$ we have 
\[ \max\{ \phi(p)-\phi(q)\,|\,p,q\in \TT(N,\a)\}\leq x_N(\phi).\]
It follows from the definitions and the discussion in Section \ref{section:convex} that
$\TT(N,\a)^{\sym}\leq \PP_N$. 
Since $\a$ is a unitary representation it follows from \cite[Theorem~1.2]{FKK12} that $\TT(N,\a)^{\sym}\doteq \TT(N,\a)$. It thus follows that indeed $ \TT(N,\a)\leq \PP_N$.

If $N$ is not a closed graph manifold, then, building on Theorem~\ref{thm:virtualfib}, it was shown  in \cite[Corollary~5.10]{FV12} that there exists a unitary representation  $\a \co \pi\to U(k,\C)$ such that 
\[ \max\{ \phi(p)-\phi(q)\,|\,p,q\in \TT(N,\a)\}= x_N(\phi)\]
for every $\phi\in H^1(N;\R)$. The same argument as above then implies that $ \TT(N,\a)\doteq \PP_N$.
If $N$ is a closed graph manifold, then the same statement holds by \cite{FN15}. 
\end{proof}

\section{Proof of Theorem  \ref{mainthm} (II)}
\label{section:thurstonlarger}

The goal of this section is to prove the following proposition, and to complete the proof of the main theorem.

\begin{proposition}\label{mainprop2}
Let  $\pi=\langle x,y\,|\, r\rangle$  be a cyclically reduced $(2,1)$--presentation for the fundamental group of an irreducible  3--manifold $N$.  Then 
\[ \mathcal{P}_\pi^{\sym}\leq \mathcal{P}_N.\]
\end{proposition}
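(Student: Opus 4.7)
The plan is to reduce to a Thurston-norm inequality via non-commutative Reidemeister torsion. Concretely, I will show that for every $\phi\in H^1(N;\R)$ the thickness inequality $\th_{\PP_\pi}(\phi)\le x_N(\phi)$ holds; since $\PP_N$ is symmetric, the duality between symmetric convex sets and their support seminorms recalled in Section~\ref{section:convex} then yields $\PP_\pi^{\sym}\le \PP_N$. By linearity and continuity it suffices to consider integral $\phi\in H^1(N;\Z)=\Hom(\pi,\Z)$, and, exchanging $x$ and $y$ if necessary, I may assume $\phi(y)\ne 0$.

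Two preparatory inputs enter. First, Weinbaum's theorem \cite{We72}: since $r$ is cyclically reduced, every proper cyclic subword of $r$ is non-trivial in $\pi$. A short calculation using
\[ r_y = \sum_{i:\, a_i=y} a_1\cdots a_{i-1} \;-\; \sum_{i:\, a_i=y^{-1}} a_1\cdots a_i \]
shows that any ratio of two monomials of $r_y$ is such a proper cyclic subword; hence these monomials project to pairwise distinct elements of $\pi$, forming a finite set $S\subset \pi$. Second, Theorem~\ref{thm:pinresg} together with closure of $\TTEEAA$ under direct products makes $\pi$ fully residually $\TTEEAA$; combining with $\phi$ itself, one produces an epimorphism $\a\co \pi\to \G$ onto a torsion-free elementary amenable group such that (a) $\a|_S$ is injective and (b) $\phi$ factors as $\phi=\bar\phi\circ\a$ for some $\bar\phi\co \G\to\Z$. (Take $(\a,\phi)\co\pi\to\G\times\Z$, where $\a\co\pi\to\G$ is supplied by Corollary~\ref{cor:groupring} applied to the finitely many non-trivial elements $s\, s'{}^{-1}$ with $s\ne s'$ in $S$.)

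Since $\G\in \TTEEAA$, the group ring $\Z[\G]$ embeds into its Ore skew field $\K(\G)$. By Theorem~\ref{thm:ep61}, $N$ has toroidal boundary and the presentation 2-complex $X$ of $\pi$ is simple-homotopy equivalent to $N$; the Fox chain complex of $X$ therefore computes the non-commutative Reidemeister torsion $\tau(N,\a)\in K_1(\K(\G))$. Because $\phi(y)\ne 0$ forces $\a(y-1)$ to be a nonzero---hence invertible---element of $\K(\G)$, this torsion is
\[ \tau(N,\a)\doteq \a(r_y)\cdot\a(y-1)^{-1}. \]
The Cochran--Harvey--Friedl bound \cite{Co04,Ha05,Fr07} gives $\deg_{\bar\phi}\tau(N,\a)\le x_N(\phi)$. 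Now injectivity of $\a$ on $S$ forces $\deg_{\bar\phi}\a(r_y)=\th_{\PP(r_y)}(\phi)$, and trivially $\deg_{\bar\phi}\a(y-1)=|\phi(y)|=\th_{\PP(y-1)}(\phi)$; subtracting and recalling that $\PP_\pi=\PP(r_y)-\PP(y-1)$ by Proposition~\ref{prop:foxpolytope} yields $\th_{\PP_\pi}(\phi)\le x_N(\phi)$. The case $b_1(\pi)=1$ reduces to the above via the associated simple presentation of Section~\ref{section:defpolytopepi}.

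The main obstacle is the non-commutative torsion machinery of the last paragraph: I must verify that the Fox complex of $X$ really does compute $\tau(N,\a)$ in the skew-field setting (relying on simple-homotopy invariance together with the vanishing of boundary contributions once $\a(y-1)\in \K(\G)^\times$), and that the Cochran--Harvey--Friedl upper bound extends from PTFA groups to the full class $\TTEEAA$. The Weinbaum input is what guarantees that the Fox polytope passes intact through $\a$, so that the Thurston-norm estimate matches $\PP_\pi$ tightly rather than some smaller image of it.
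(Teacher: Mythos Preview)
Your approach is essentially the paper's: prove $\th_{\PP_\pi}(\phi)\le x_N(\phi)$ for a dense set of $\phi$ via non-commutative Reidemeister torsion over $\K(\G)$ for a suitable $\G\in\TTEEAA$, then invoke Section~\ref{section:convex}. The residual input (Corollary~\ref{cor:groupring}/Theorem~\ref{thm:pinresg}), the Weinbaum step, and the Cochran--Harvey--Friedl bound are all used in the same way.

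There is, however, a genuine indexing slip. Proposition~\ref{prop:foxpolytope} gives $\MM_\pi+\MM(x-1)\doteq\MM(r_y)$ and $\MM_\pi+\MM(y-1)\doteq\MM(r_x)$, so $\PP_\pi=\PP(r_y)-\PP(x-1)$ (equivalently $\PP(r_x)-\PP(y-1)$), \emph{not} $\PP(r_y)-\PP(y-1)$. Correspondingly, the Fox chain complex of $X$ has $\partial_2=(r_x,r_y)^t$ and $\partial_1=(x-1,\,y-1)$; once $\a(x-1)$ is a unit in $\K(\G)$ one obtains $\tau(N,\a)\doteq\a(r_y)\cdot\a(x-1)^{-1}$, and once $\a(y-1)$ is a unit one obtains $\tau(N,\a)\doteq\a(r_x)\cdot\a(y-1)^{-1}$. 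Your displayed formula $\a(r_y)\cdot\a(y-1)^{-1}$ mixes the two and does not compute the torsion. The fix is simply to assume $\phi(x)\ne 0$ (rather than $\phi(y)\ne 0$) and replace $y-1$ by $x-1$ throughout; your Weinbaum argument for $r_y$ then lines up. This is exactly how the paper sets things up.

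Two minor remarks. First, the paper is slightly more economical: rather than separating all monomials of $r_y$, it only needs the extremal vertex components $r_y^v,\,r_y^w$ to survive in $\Z[\G]$, and applies Corollary~\ref{cor:groupring} to the single element $r_y^v\cdot r_y^w$; the non-vanishing of these components in $\Z[\pi]$ is where Weinbaum is (implicitly) used. Second, the obstacles you flag are already handled in the cited literature: \cite[Theorem~2.1]{Fr07} gives the Fox computation of $\tau(X,\a)$ over skew fields, and \cite[Theorem~1.2]{Fr07} gives the Thurston-norm bound for any target group whose integral group ring embeds in its Ore skew field, which covers all of $\TTEEAA$ by \cite{KLM88} and \cite{DLMSY03}.
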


In the proof of Proposition~\ref{mainprop1} we used twisted Reidemeister torsions corresponding to finite-dimensional complex representations.
In the proof of Proposition~\ref{mainprop2} we use a different, but related, object, namely non-commutative Reidemeister torsions. In this context they were first studied in \cite{COT03,Co04,Ha05,Fr07}.

\subsection{The Ore localization of group rings and degrees}\label{section:ore}

Let $\G\in\TTEEAA$.
It follows from \cite[Theorem~1.4]{KLM88} that the group ring $\Z[\G]$ is a domain.
Since  $\G$ is amenable  it follows from \cite[Corollary~6.3]{DLMSY03} that $\Z[\G]$ satisfies the Ore condition. This means that for any
two non-zero elements $x,y\in \Z[\G]$ there exist non-zero elements $p,q\in \Z[\G]$ such that $xp=yq$. By \cite[Section~4.4]{Pa77} this implies that 
$\Z[\G]$ has a classical fraction field, referred to as the Ore localization of $\Z[\G]$, which we denote by $\K(\G)$. 

Let $\phi\co \G\to \Z$ be a homomorphism. For every non-zero $p=\sum_{g\in \G}p_gg\in \Z[\G]$ we define
\[ \deg_\phi(p)=\max\{ \phi(g)-\phi(h)\,|\, p_g\ne 0\mbox{ and } p_h\ne 0\}.\]
We extend this to all of $\Z[\G]$ by letting $\deg_\phi(0)=-\infty$. 
Since $\Z[\G]$ has no nontrivial zero divisors it follows that for $p,q\in \Z[\G]$ we have $\deg_\phi(pq)=\deg_\phi(p)+\deg_\phi(q)$.
Given $pq^{-1}\in \K(\G)$ we also define
\[ \deg_\phi(pq^{-1}):=\deg_\phi(p)-\deg_\phi(q).\]
It is straightforward to see that  this is indeed well-defined. 

\subsection{Non-commutative Reidemeister torsion of presentations}

Let $X$ be a finite $CW$--complex with $G=\pi_1(X)$, and denote $\wti{X}$ the universal cover of $X$.
As in Section  \ref{section:twitorsion} we view $C_*(\wti{X})$ as a chain complex of left $\Z[G]$-modules.
Now let $\varphi\co G\to \G\in \TTEEAA$ be a homomorphism, and consider the chain complex of left $\K(G)$-modules
\[ C_*(X;\K(\G))=\K(\G) \otimes_{\Z[G]}C_*(\wti{X}),\]
where $G$ acts on $\K(\G)$ on the right via the homomorphism $\varphi$. 
If $C_*(X;\K(\G))$ is not acyclic, define the corresponding Reidemeister torsion $\tau(X,\varphi)$ to be zero.
Otherwise choose an ordering of the cells of $X$ and  for  each cell in $X$ pick a lift to $\wti{X}$. This turns $C_*(X;\K(\G))$ into a chain complex of based $\K(\G)$ left-modules and we define 
\[ \tau(X,\varphi) \in K_1(\K(\G))\]
to be the Reidemeister torsion of the based chain complex $C_*(X;\K(\G))$.
Here $K_1(\K(\G))$ is the abelianization of the direct limit $\lim\limits_{n\to \infty} \mbox{GL}(n,\K(\G))$ of the general linear groups over $\K(\G)$ (see \cite{Mi66,Ro94} for details).
We  write $\K(\G)^\times =\K(\G)\sm \{0\}$
and denote by $\K(\G)^\times_{\op{ab}}$ the abelianization of the multiplicative group $\K(\G)^\times$.
 The Dieudonn\'e determinant, see \cite{Ro94}, 
gives rise to an isomorphism $K_1(\K(\G))\to \K(\G)^\times_{\op{ab}}$ which we will use to identify these two groups.
The invariant  $\tau(X,\varphi)\in \K(\G)^\times$ is well-defined up to multiplication by an element of the form $\pm g,$ where $g\in \G$. 
Furthermore, it does not depend on the homeomorphism type of $X$.
We refer to \cite{Tu01,Fr07,FH07} for details.

It follows from $\deg_\phi(p\cdot q)=\deg(p)+\deg(q)$ for $p,q\in \K(\G)^\times$ that $\deg_\phi$ descends to a homomorphism $\deg_\phi\co \K(\G)^\times_{\op{ab}}\to \Z$. In particular $\deg_\phi(\tau(X,\varphi))$ is defined.


\begin{proof}[Proof of Proposition \ref{mainprop2}]
Let  $N$ be an irreducible 3--manifold and suppose $\pi=\langle x,y\,|\, r\rangle$ is a cyclically reduced $(2,1)$--presentation of its fundamental group. Without loss of generality we may assume that $x$ represents a non-zero element in $H:=H_1(N;\Z)/\mbox{torsion}$. We need to show that   $\PP_\pi\leq \PP_N$. 

We call $\phi\in \hom(\pi,\R)$ \emph{generic} if there are vertices $v$ and $w$ of $\PP(r_y)$ such that  $\phi$ pairs maximally with $v$ and $\phi$ pairs minimally with $w$.

\begin{claim}
For any generic epimorphism $\phi\co \pi\to \Z,$ we have
\( \th_{\PP_\pi}(\phi)\leq x_N(\phi).\) 
\end{claim}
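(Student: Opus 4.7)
The plan is to prove $\th_{\PP_\pi}(\phi)\leq x_N(\phi)$ by exhibiting a non-commutative Reidemeister torsion $\tau(N,\varphi)$ associated to a homomorphism $\varphi\co\pi\to\G\in\TTEEAA$ through which $\phi$ factors, that simultaneously computes the thickness of $\PP_\pi$ in the $\phi$-direction via $\deg_\phi$ and is bounded above by $x_N(\phi)$ via the non-commutative torsion estimate of \cite{Co04,Ha05,Fr07}. By Theorem~\ref{thm:ep61} the manifold $N$ has toroidal boundary, and the $2$--complex $X$ of $\pi$ is simple-homotopy equivalent to $N$, so the torsion may be computed at the level of the presentation. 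I focus on the case $b_1(\pi)=2$; the $b_1(\pi)=1$ case reduces to this argument after first passing to the simple presentation of Section~\ref{section:defpolytopepi}.

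The essential new step is the construction of $\varphi$. Assume without loss of generality that $\phi(x)\neq 0$, swapping the roles of $x$ and $y$ if necessary. Genericity of $\phi$ supplies unique $\phi$-maximal and $\phi$-minimal vertices $v,w$ of $\PP(r_y)$; the cyclic reducedness of $r$ combined with Weinbaum's theorem \cite{We72} then ensures that the vertex components $r_y^v,r_y^w\in\Z[\pi]$ are both non-zero, as each is a sum of pairwise distinct group elements with coefficients $\pm 1$. Using the fact that $\pi$ is fully residually $\TTEEAA$ (Theorem~\ref{thm:pinresg} together with closure of $\TTEEAA$ under direct products, as in the proof of Corollary~\ref{cor:groupring}), I would pick $\alpha\co\pi\to\G_0\in\TTEEAA$ with $\alpha(r_y^v)$ and $\alpha(r_y^w)$ both non-zero, and then set $\varphi:=\alpha\times\phi\co\pi\to\G_0\times\Z=:\G$. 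The target $\G$ lies in $\TTEEAA$, the second projection $\tilde\phi\co\G\to\Z$ satisfies $\phi=\tilde\phi\circ\varphi$, the images $\varphi(r_y^v),\varphi(r_y^w)$ are non-zero in $\Z[\G]$, and $\varphi(x-1)\neq 0$ because $\phi(x)\neq 0$.

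With this $\varphi$, the Kitano-type computation of $\tau(X,\varphi)$ over the skew field $\K(\G)$ (see \cite{FH07,Fr07}) yields
\[\tau(N,\varphi)=\varphi(r_y)\cdot\varphi(x-1)^{-1}\in\K(\G)^\times_{\op{ab}}.\]
Because $v,w$ are the unique $\phi$-extreme vertices of $\PP(r_y)$ and the non-vanishing of $\varphi(r_y^v),\varphi(r_y^w)$ prevents any degree-lowering cancellation in $\varphi(r_y)$, one has $\deg_\phi\varphi(r_y)=\phi(v)-\phi(w)=\th_{\PP(r_y)}(\phi)$, and similarly $\deg_\phi\varphi(x-1)=|\phi(x)|=\th_{\PP(x-1)}(\phi)$. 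Since $\PP_\pi+\PP(x-1)\doteq\PP(r_y)$ by Proposition~\ref{prop:foxpolytope} and thickness is additive under Minkowski sums,
\[\deg_\phi\tau(N,\varphi)=\th_{\PP(r_y)}(\phi)-\th_{\PP(x-1)}(\phi)=\th_{\PP_\pi}(\phi).\]
Combining this with the standard upper bound $\deg_\phi\tau(N,\varphi)\leq x_N(\phi)$ finishes the claim.

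The main obstacle I anticipate is precisely the joint construction of $\varphi$: one must arrange simultaneously that $\G\in\TTEEAA$, that $\phi$ factor through $\varphi$, and that the two specific vertex components $r_y^v,r_y^w$ survive the passage to $\Z[\G]$. This is where the cyclic reducedness of $r$ (via Weinbaum) and the residual $\TTEEAA$ property of $\pi_1(N)$ (Theorem~\ref{thm:pinresg}) join forces: without cyclic reducedness, the vertex components of $r_y$ could already vanish in $\Z[\pi]$; without the residual property, one cannot transfer the non-vanishing from $\Z[\pi]$ to a $\TTEEAA$-quotient.
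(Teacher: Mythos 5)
Your proposal follows the same route as the paper: find a homomorphism to a torsion-free elementary amenable group under which the two $\phi$-extreme vertex components of $r_y$ survive, identify $\th_{\PP_\pi}(\phi)$ with $\deg_\phi$ of the non-commutative Reidemeister torsion $\tau(N,\varphi)=\varphi(r_y)\varphi(x-1)^{-1}$, and invoke the torsion bound of \cite{Fr07,Co04,Ha05}. Two small remarks: the paper tensors $\a$ with the full abelianization $\psi$ rather than with $\phi$, and fixes $\psi(x)\ne 0$ once at the start of Proposition~\ref{mainprop2} rather than swapping $x$ and $y$ mid-proof (in fact, once $\psi(x)\ne 0$, genericity of $\phi$ for $\PP(r_y)=\PP_\pi+\PP(x-1)$ already forces $\phi(x)\ne 0$, so no swap is needed); these are cosmetic and the substance of the argument matches the paper's.
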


We denote by $v$ and $w$ the (necessarily unique) vertices  of $\PP(r_y)$ such that  $\phi$ pairs maximally with $v$ and  minimally with $w$.
By Corollary  \ref{cor:groupring} and Theorem~\ref{thm:ep61} there exists a homomorphism $\a\co \pi_1(N)\to \G\in\TTEEAA$  such that $\a(r_y^v\cdot r_y^w)\ne 0$. In particular, $\a(r_y^v)\ne 0$ and  $\a(r_y^w)\ne 0$.
Denote $\psi\co \pi\to H$ the canonical epimorphism. 
After possibly replacing $\a$ by $\a\times \psi$ we can and will assume that $\psi$ factors through $\a$. 
In particular $\phi$ factors through $\a$ and $\a(x)$ is a non-trivial element in $\G$.

We  denote by $X$ the $CW$--complex corresponding to the presentation $\pi$ with one 0--cell, two 1--cells corresponding to the generators $x$ and $y$ 
and one 2--cell corresponding to the relator $r$. As in \cite{FT15} we have $\tau(N,\a)=\tau(X,\a)$. 
We then have 
\[ \ba{rcl}
\th_{\PP_\pi}(\phi)&=&\th_{\PP(r_y)}(\phi)-\th_{\PP(x-1)}(\phi)\\
&=&(\phi(v)-\phi(w))-|\phi(x)|\\
&=&
\degphi(\a(r_y))-\degphi(\a(x)-1)\\
&=&\degphi(\a(r_y)\cdot \a(x-1)^{-1})\\
&=&\degphi(\tau(X,\a))=\degphi(\tau(N,\a))\leq x_N(\phi).\ea \]
Here the first two equalities follows from the definitions and the choice of $v$ and $w$. The fifth equality is \cite[Theorem~2.1]{Fr07} and the last inequality is given  by  \cite[Theorem~1.2]{Fr07} (see also \cite{Co04,Ha05,Tu02}).
This concludes the proof of the claim.\\

It is straightforward to see that the non-generic elements in $\hom(\pi,\R)$ correspond to a union of proper subspaces of $\hom(\pi,\R)$. 
By continuity  and linearity of seminorms we see  that the inequality
$ \th_{\PP_\pi}(\phi)\leq x_N(\phi)$ holds in fact
for all $\phi\in \hom(\pi,\R)$. It follows from the definitions and the discussion in Section \ref{section:convex} that
$\PP_N^{\sym}\leq \PP_N$.
\end{proof}

\subsection{}
\label{section:proofmainthm}
For the reader's convenience we recall the statement of Theorem~\ref{mainthm}.

\mainthm*


\begin{proof}
It follows from Propositions \ref{mainprop1} and \ref{mainprop2} that $\PP_N\leq \PP_\pi$ and $\PP_\pi^{\sym}\leq \PP_N$. By the symmetry of the Thurston norm we also have $\PP_N=\PP_N^{\sym},$ and this implies $\PP_N\doteq \PP_\pi$.

The fact that the markings agree is an immediate consequence of \cite[Theorem~1.1]{FT15} and \cite[Theorem~E]{BNS87}.
\end{proof}

\section{Examples}
\label{sec:Examples}

Currently there is no geometric characterization of those 3--manifolds whose fundamental group may be presented using only two generators and one relator. Waldhausen's question~\cite{Wald78}
of whether the rank of the fundamental group equals the Heegaard genus gives the conjectural picture that all of these manifolds have tunnel-number one. Li~\cite{Li11} gives examples of 3--manifolds whose rank is strictly smaller than the genus, including closed manifolds, manifolds with boundary, hyperbolic manifolds, and manifolds with non-trivial JSJ decomposition. See also related work of Boileau, Weidmann and Zieschang~\cite{BoZi84, BoWe05}. However, Waldhausen's question remains open for hyperbolic 3--manifolds of rank 2 and for knot complements in $S^3$.
 
\subsection{Tunnel-number one manifolds}
A \emph{tunnel-number one} 3--manifold is a 3--manifold obtained by attaching a 2--handle to a 3--dimensional 1--handlebody of genus two. The fundamental group has a presentation with two generators  from the handlebody and one relator corresponding to the attaching circle of the 2--handle.
Theorem~\ref{mainthm} allows us to compute the unit ball of the Thurston norm with ease, whilst other methods, such as normal surface theory~\cite{TW96, CT09} have limited scope (cf.~\cite{DR10}). Moreover, with Theorem~\ref{mainthm} one can easily construct examples with prescribed combinatorics or geometry of the unit ball.

Brown's algorithm is an essential ingredient in Dunfield and D.~Thurston's proof~\cite{DT05} that the probability of a tunnel-number one manifold fibring over the circle is zero. This can be paraphrased as: the probability that the unit ball has a non-empty set of marked vertices is zero. Interesting applications of Theorem~\ref{mainthm} combined with the methods of \cite{DT05} would be further predictions about the unit ball of a random tunnel-number one manifold.

\subsection{Knots or links in $\mathbf{S^3}$}
Norwood \cite{No82} showed that if the complement of a given knot in $S^3$ has fundamental group generated by two elements, then the knot is prime. The complements of \emph{tunnel-number one} knots or links in $S^3$ are tunnel-number one manifolds. This includes the 2--bridge knots and links, but Johnson~\cite{Jo06} showed that there are hyperbolic tunnel-number one knots with arbitrarily high bridge number. There is a complete classification of all tunnel-number one satellite knots by Morimoto and Sakuma \cite{MS91}, and Morimoto \cite{Mo94} also showed that a composite link has tunnel-number one if and only if it is a connected sum of a 2--bridge knot and the Hopf link.

\begin{figure}[h] 
\begin{subfigure}[t]{3.8cm}
		\centering
		\raisebox{0.9cm}
		{\includegraphics[width=3.8cm]{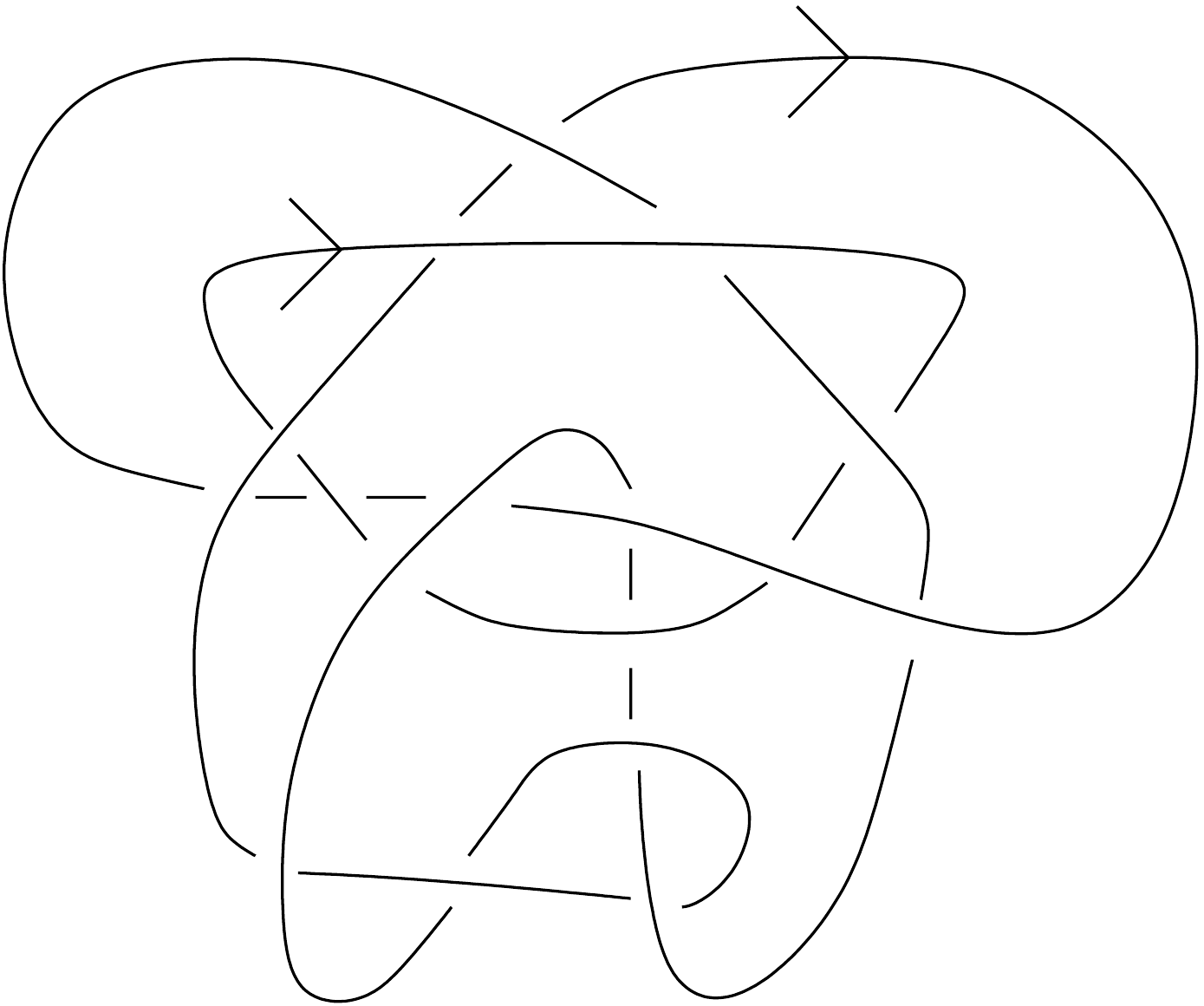}}
 		\caption{The link}\label{dunfield} \label{fig:dunfield}
		 \hfill
 \end{subfigure}
 \quad\quad
\begin{subfigure}[t]{11cm}
		\centering
		\includegraphics[width=11cm]{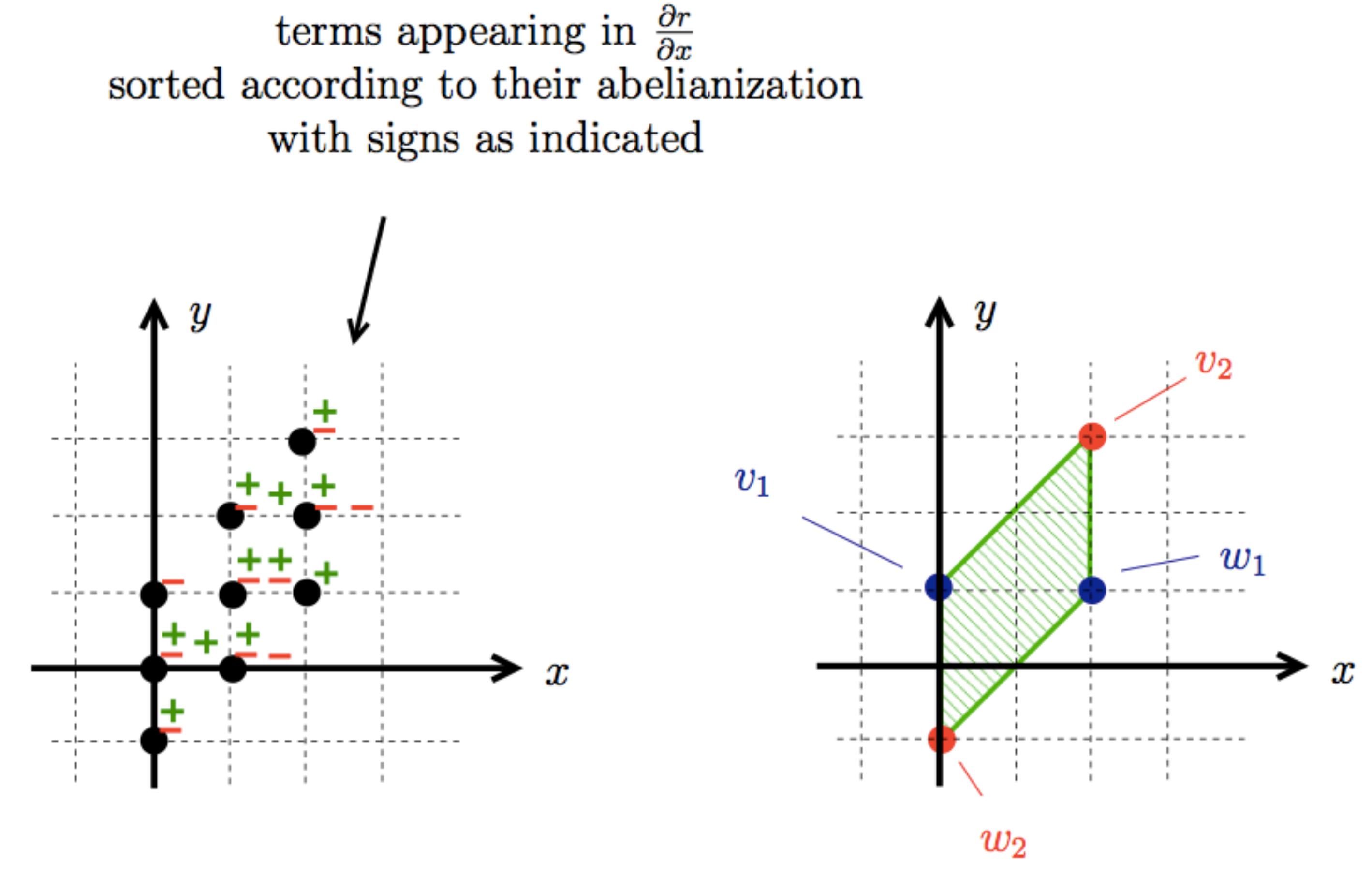}
		\caption{Calculation of $\frac{\partial r}{\partial x}$ for Dunfield's link}\label{fig:PP(r_x)}
\end{subfigure}
\caption{Dunfield's link}
\end{figure}

\subsection{Dunfield's link}\label{section:dunfield}
We conclude this section with an explicit calculation for the link $L$ shown in Figure~\ref{dunfield}, which was studied by Dunfield~\cite{Du01}.
Write $X_L:=S^3\sm \nu L$ and denote $\pi:=\pi_1(X_L)$ the link group. Then $\pi$  has the presentation
\[\ll x,y| 
x^2yx^{-1}yx^2yx^{-1}y^{-3}x^{-1}yx^2yx^{-1}yxy^{-1}x^{-2}y^{-1}xy^{-1}x^{-2}y^{-1}xy^3xy^{-1}x^{-2}y^{-1}xy^{-1}x^{-1}y\rr\]
where a meridian for the unknotted component is given by $y^{-1}x^{-1}yx^2yx^{-1}yx^2yx^{-1}y^{-3}$ and a meridian
for the other component is given by $x^{-1}y^{-1}$. Theorem \ref{mainthm} implies that $\PP_\pi\doteq \PP_N$. 

We use the map induced by $x\mapsto (1,0)$ and $y\mapsto (0,1)$ to identify $H_1(X_L;\Z)=H_1(\pi;\Z)$ with $\Z^2$. 
A straightforward calculation shows that $\PP(r_x)$ is the polytope with vertices $v_1=(0,1),v_2=(2,3),w_1=(2,1)$ and $w_2=(0,-1)$ shown in Figure~\ref{fig:PP(r_x)}. Here $v_1$ and $w_1$ are opposite vertices of $\PP(r_x)$ and $v_2$ and $w_2$ are opposite vertices of $\PP(r_x)$. Subtracting the underlying polytope of $\MM(y-1)$ from $\PP(r_x)$ gives $\PP_\pi,$ and this agrees (up to translation) with Figure~\ref{fig:dunfield-intro}. The following computation shows that the markings are the same:
{\Small
\[ \ba{lcl}
(r_x)^{v_1}&=&x^2yx^{-1}yx^2yx^{-1}y^{-3}x^{-1}yx^2yx^{-1}yxy^{-1}x^{-2}y^{-1}xy^{-1}x^{-2}y^{-1}xy^3xy^{-1}x^{-2}\\
 (r_x)^{w_1}&=&x^2yx^{-1}yx^2yx^{-1}y^{-3}x^{-1}yx\\
(r_x)^{v_2}&=&x^2yx^{-1}yx^2yx^{-1}(-1+y^{-3}x^{-1}yx^2yx^{-1}y)\\
 (r_x)^{w_2}&=&x^2yx^{-1}yx^2yx^{-1}y^{-3}x^{-1}yx^2yx^{-1}yxy^{-1}x^{-2}y^{-1}xy^{-1}x^{-2}y^{-1}(1-xy^3xy^{-1}x^{-2}y^{-1}xy^{-1}x^{-1}).\ea \]
}

\section{A conjecture and a question}\label{section:questions}

\subsection{} We conjecture that Poincar\'e duality for the 3--manifold can be seen on the level of group presentations as follows:

\begin{conjecture}\label{conj:sym}
Let $\pi=\ll x,y|r\rr$ be a $(2,1)$--presentation for the fundamental group of a 3--manifold. Let $v$ be a vertex of $\PP(r_x)$. Then there exists a unique vertex $w$ of $\PP(r_y)$ such that $v$ and $w$ are opposite vertices. Furthermore we have
\[ (r_y)^{v}\equiv (-1)^{b_0(\partial N)-1}\ol{(r_x)^w}.\]
\end{conjecture}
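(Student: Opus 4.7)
The plan is to derive Conjecture~\ref{conj:sym} as a chain-level consequence of Poincar\'e--Lefschetz duality applied to the presentation $2$-complex $X$ of $\pi=\ll x,y\mid r\rr$. By Theorem~\ref{thm:ep61}, $\partial N$ consists of $b_0(\partial N)\in\{1,2\}$ tori, and since $\pi$ has deficiency one, a standard argument (already used in the proof of Theorem~\ref{mainthm}) shows that $X$ is simple-homotopy equivalent to $N$. Consequently the Fox-calculus chain complex of $\widetilde X$,
\[ 0 \to \Z[\pi] \xrightarrow{(r_x,\,r_y)} \Z[\pi]^2 \xrightarrow{\binom{x-1}{y-1}} \Z[\pi] \to 0, \]
is chain-homotopy equivalent to $C_*(\widetilde N)$ as a complex of free based left $\Z[\pi]$-modules.

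For the oriented $3$-manifold $N$ with toroidal boundary, Poincar\'e--Lefschetz duality furnishes a chain-homotopy equivalence $C_*(\widetilde N) \simeq \overline{C^{3-*}(\widetilde N,\partial \widetilde N)}$, where the bar denotes the left $\Z[\pi]$-module structure induced from the right structure on the $\op{Hom}$-dual via the involution $g\mapsto g^{-1}$. Combining this with the long exact sequence of the pair $(\widetilde N,\partial \widetilde N)$ and the fact that each torus boundary component contributes a completely understood piece, one should obtain a chain-homotopy equivalence between the Fox complex above and its conjugate-transposed partner
\[ 0 \to \Z[\pi] \xrightarrow{(\overline{x-1},\,\overline{y-1})} \Z[\pi]^2 \xrightarrow{\binom{\overline{r_x}}{\overline{r_y}}} \Z[\pi] \to 0, \]
up to a correction term controlled by $b_0(\partial N)$; this correction is the expected source of the sign $(-1)^{b_0(\partial N)-1}$, which tracks whether there is one torus boundary component or two.

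To extract the vertex-level identity I would use that, by Theorem~\ref{mainthm}, the polytope $\PP_\pi\doteq \PP_N$ is centrally symmetric, so $\PP(r_x)=\PP_\pi+\PP(y-1)$ and $\PP(r_y)=\PP_\pi+\PP(x-1)$ each admit a canonical central involution; this pairs vertices of $\PP(r_x)$ with vertices of $\PP(r_y)$ in the manner demanded by the ``opposite vertex'' phrasing of the conjecture. Restricting the chain-level duality from the previous step to the extremal vertex components on each side should yield exactly $(r_y)^v\equiv (-1)^{b_0(\partial N)-1}\,\overline{(r_x)^w}$, up to a unit and a group element. The main obstacle I foresee is a rigidity issue: Poincar\'e--Lefschetz duality produces a chain equivalence only up to chain homotopy, whereas the conjecture asserts a strict equality of group-ring elements at each vertex. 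Overcoming this will probably require working with a specific handle decomposition of $N$---one $0$-handle, two $1$-handles (for $x,y$), one $2$-handle (for $r$), and $b_0(\partial N)-1$ $3$-handles---so that the cap product with the relative fundamental class $[N,\partial N]$ becomes an explicit matrix identity from which the leading components $(r_x)^w$ and $(r_y)^v$ can be read off directly.
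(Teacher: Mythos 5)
The statement you set out to prove is not actually proved in the paper: it is stated as Conjecture~\ref{conj:sym} and remains open. The authors offer only supporting evidence --- the duality/reciprocity theorems for twisted Reidemeister torsions (Kitano, Hillman--Silver--Williams, Friedl--Kim--Kitayama) and an explicit verification for Dunfield's link --- but no argument. So there is no ``paper's proof'' to compare against; what must be assessed is whether your sketch closes the gap. It does not, though it is aimed in a reasonable direction.

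The central difficulty, which you correctly flag but do not overcome, is that Poincar\'e--Lefschetz duality for $(\widetilde N,\partial\widetilde N)$ only gives a $\Z[\pi]$-chain-homotopy equivalence between the Fox complex of $X$ and the conjugate-dual complex. All the standard consequences of this --- in particular the torsion symmetry $\tau(N,\a)\doteq\overline{\tau(N,\a)}$ used in \cite{Ki96,HSW10,FKK12} --- concern quantities that are invariant under chain homotopy and base change, namely torsions, i.e.\ alternating products of Dieudonn\'e determinants. The conjecture, by contrast, asserts an equality of \emph{individual entries} of the Fox Jacobian (more precisely, of their extreme-vertex components in $\Z[\pi]$). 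Such entries are emphatically not chain-homotopy invariants: a single elementary expansion or change of lifts already perturbs them. Your proposal to rigidify by picking a handle decomposition matching the presentation is where a proof would have to live, but nothing you write pins down the duality chain map well enough to kill the chain-homotopy ambiguity, and it is far from clear that any choice does. Note also that even if the three-cell handle picture is used, the cap product with $[N,\partial N]$ is itself defined only up to homotopy, and the needed identification of $C_*(\widetilde N,\partial\widetilde N)$ with $C^{3-*}(\widetilde N)$ on the nose for that particular decomposition is the entire content of the conjecture, not a step toward it.

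Two further concerns. First, your interpretation of ``opposite vertices'' leans on Theorem~\ref{mainthm} to know $\PP_\pi$ is centrally symmetric, but that theorem requires the presentation to be cyclically reduced, whereas Conjecture~\ref{conj:sym} does not assume this, so the vertex pairing you propose is not even defined in the stated generality (and arguably the correct formulation of ``opposite'' should be intrinsic rather than derived from the conclusion of the main theorem). Second, the sign $(-1)^{b_0(\partial N)-1}$ is asserted to arise as a ``correction term controlled by'' the number of boundary tori; this must ultimately be traced to the Euler characteristic bookkeeping in the long exact sequence of the pair, and that tracing is not done. In summary: the strategy is the natural one and consistent with the evidence cited in the paper, but the proposal as written does not constitute a proof, and the specific obstacle you identify is precisely why the statement is still a conjecture.
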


The twisted Reidemeister torsions of \cite{FV10} can be computed in terms of Fox derivatives, and the symmetry results for twisted Reidemeister torsions proved in \cite{Ki96,HSW10,FKK12} give strong evidence towards this conjecture.

To give an explicit example, let us return to Dunfield's link.
Given the group $G$ and $p,q\in \Z[G],$ write $p\equiv q$ if there exist $g,h\in G$ such that $p=gqh$. Furthermore, denote $p\mapsto \ol{p}$ the involution of $\Z[G]$ defined by the inversion map $g\mapsto g^{-1}$ for each $g\in G$.
We denote by $\pi=\ll x,y|r\rr$ the presentation from Section \ref{section:dunfield}.
 We then note that 
\[ \ba{lcl}(r_x)^{v_2}&\equiv &-1+y^{-3}x^{-1}yx^2yx^{-1}y\\
 (r_x)^{w_2}&\equiv&1-xy^3xy^{-1}x^{-2}y^{-1}xy^{-1}x^{-1}.\ea \]
The relator $r$ is conjugate to 
\[ yx^2yx^{-1}yx^2yx^{-1}\big(y^{-3}x^{-1}yx^2yx^{-1}y\big)xy^{-1}x^{-2}y^{-1}xy^{-1}x^{-2}y^{-1}\big(xy^3xy^{-1}x^{-2}y^{-1}xy^{-1}x^{-1}\big).\]
In particular writing $s=yx^2yx^{-1}yx^2yx^{-1}$ we have the following equality in $\Z[\pi]$:
\[\ba{rcl} (r_x)^{v_2}&\equiv&s\,(r_y)^{v_2}\,s^{-1}=s(-1+y^{-3}x^{-1}yx^2yx^{-1}y)s^{-1}\\
&=&-1+\big(xy^3xy^{-1}x^{-2}y^{-1}xy^{-1}x^{-1}\big)^{-1}\\[2mm]
&=&-\ol{(r_x)^{w_2}}.\ea \]

\subsection{} 
We initially attempted to prove Theorem~\ref{mainthm} just using twisted Reidemeister torsions corresponding to finite-dimensional representations, noting that Theorem~\ref{mainthm} follows from Proposition~\ref{prop:tapdetecttn}~(1) together with an affirmative answer to the following question, which is interesting in its own right.

\begin{question}\label{qu:groupring}
Let $N$ be an aspherical 3--manifold and write $\pi=\pi_1(N)$. Let $p$ be  a non-zero element in $\Z[\pi]$. Does there exist a representation $\a\co \pi\to \gl(k,\C)$ such that $\det(\a(f))\ne 0$?
\end{question}


\begin{thebibliography}{10}

\bibitem[Ag08]{Ag08}
I. Agol, {\em Criteria for virtual fibering}, J. Topol. 1 (2008), no. 2, 269--284.
\bibitem[Ag13]{Ag13}
I. Agol, {\em The virtual Haken conjecture}, with an appendix by I. Agol, D. Groves and J. Manning, 
	Documenta Math. 18 (2013), 1045--1087.
\bibitem[AFW15]{AFW15}
M. Aschenbrenner, S. Friedl and H. Wilton, {\em 3--manifold groups}, to be published by the EMS Series of Lectures in Mathematics (2015)
\bibitem[BNS87]{BNS87}
R. Bieri, W. D. Neumann and R. Strebel, {\em A geometric invariant of discrete groups}, Invent.
Math. 90 (1987), 451-477.
\bibitem[BW05]{BoWe05} M. Boileau and R. Weidmann, {\em The structure of 3--manifolds with two-generated fundamental group}, Topology 44 (2005) 283--320.
\bibitem[BZ84]{BoZi84} M. Boileau and  H. Zieschang, {\em Heegaard genus of closed orientable Seifert 3--manifolds}, Invent. Math. 76 (1984) 455--468.
\bibitem[BS05]{BS05}
S. Buyalo and P. Svetlov, {\em
Topological and geometric properties of graph-manifolds},
St. Petersburg Math. J. {\bf 16} (2005), no. 2, 297--340.
\bibitem[Co04]{Co04}
T. Cochran,  {\em Noncommutative knot theory}, Algebr. Geom. Topol. 4 (2004), 347--398.
\bibitem[COT03]{COT03}
T. Cochran, K. Orr and  P. Teichner, {\em Knot concordance, Whitney towers and $L\sp 2$-signatures},
Ann. of Math. (2) { 157} (2003),  no. 2,  433--519.
\bibitem[CT09]{CT09} D. Cooper and S. Tillmann, {\em The Thurston norm via Normal Surfaces}, 
Pac. J. Math. 239 (2009) 1--15. 
\bibitem[DL07]{DL07}
W. Dicks and P. A. Linnell, {\em $L^2$-Betti numbers of one-relator groups}, Math. Ann. 337 (2007), 855--874.
\bibitem[DLMSY03]{DLMSY03}
J. Dodziuk, P. Linnell, V. Mathai, T. Schick and S. Yates, {\em Approximating $L^2$-invariants,
and the Atiyah conjecture}, Preprint Series SFB 478 M\"unster, Germany. Communications on Pure
and Applied Mathematics  56 (2003), 839--873.
\bibitem[Du01]{Du01}
N. Dunfield, {\em Alexander and Thurston norms of fibered
3--manifolds}, Pacific J. Math. {200} (2001), no. 1,
43--58.
\bibitem[DT05]{DT05} N. Dunfield and D. Thurston, {\em Does a random tunnel-number one 3-manifold fiber over the circle?}, slides available at
\texttt{http://www.math.uiuc.edu/~nmd/preprints/slides/Treiste.pdf}
\bibitem[DR10]{DR10} N. Dunfield and D. Ramakrishnan, {\em Increasing the number of fibered faces of arithmetic
hyperbolic 3-manifolds,} Amer. J. Math. 132 (2010), no. 1, 53--97.
\bibitem[Ep61]{Ep61}
D. B. A. Epstein, {\em
Finite presentations of groups and 3--manifolds},
Quart. J. Math. Oxford Ser. (2) 12 (1961), 205--212.
\bibitem[Fo53]{Fo53}
R. H. Fox, {\em Free differential calculus I, Derivation in the free group ring}, Ann. Math. 57 (1953), 547--560.
\bibitem[Fr07]{Fr07}
S. Friedl, {\em Reidemeister torsion, the Thurston norm and Harvey's invariants},
Pac.  J. Math. 230 (2007), 271--296.
\bibitem[FH07]{FH07}
S. Friedl and S. Harvey, {\em Non--commutative multivariable Reidemeister torsion and the Thurston
norm}, Alg. Geom. Top.  7 (2007), 755--777.
\bibitem[FKm06]{FKm06}
 S. Friedl and T. Kim, \emph{Thurston norm, fibered manifolds and twisted Alexander
polynomials}, Topology  45 (2006), 929--953.
\bibitem[FKm08]{FKm08}
 S. Friedl and T. Kim, \emph{Twisted Alexander norms give lower bounds on the
Thurston norm}, Trans. Amer. Math. Soc.  360 (2008), 4597--4618.
\bibitem[FKK12]{FKK12}
 S. Friedl, T. Kim and T. Kitayama, \emph{Poincar\'e duality and degrees of twisted Alexander polynomials}, Indiana Univ. Math. J. 61 (2012), 147--192.
\bibitem[FKt14]{FKt14}
S. Friedl and T. Kitayama, {\em The virtual fibering theorem for 3--manifolds}, Enseign. Math. {60} (2014), no. 1, 79--107.
\bibitem[FT15]{FT15}
S. Friedl and S. Tillmann, {\em Two-generator one-relator groups and marked polytopes}, Preprint (2015)
 \bibitem[FV10]{FV10}
S. Friedl and  S. Vidussi,
 {\em A survey of twisted Alexander polynomials}, The Mathematics of Knots: Theory and Application (Contributions in Mathematical and Computational Sciences), editors: Markus Banagl and Denis Vogel (2010), 45--94.
\bibitem[FV12]{FV12}
S. Friedl and S. Vidussi,
{\em The Thurston norm and twisted Alexander polynomials}, Preprint (2012), to be published by J. Reine Ang. Math. 
\bibitem[Ha05]{Ha05}
S. Harvey, {\em Higher--order polynomial invariants of 3--manifolds giving lower bounds for the
Thurston norm}, Topology 44 (2005), 895--945.
\bibitem[He87]{He87}
J. Hempel, {\em Residual finiteness for 3--manifolds}, Combinatorial group theory and topology (Alta, Utah, 1984), pp. 379--396, Ann. of Math. Stud., 111, Princeton Univ. Press, Princeton, NJ, 1987.
\bibitem[Hi97]{Hi97}
J. Hillmann, {\em On $L^2$--homology and asphericity}, Israel J. Math. 99 (1997), 271--283.
\bibitem[HSW10]{HSW10}
 J. Hillman, D. Silver and S. Williams, {\em On Reciprocality of Twisted Alexander Invariants}, 
 Alg. Geom. Top.  10 (2010), 2017--2026.
\bibitem[Jo06]{Jo06} J. Johnson, {\em Bridge Number and the Curve Complex},  	arXiv:math/0603102v2.
\bibitem[Ki96]{Ki96}
T. Kitano, {\em Twisted Alexander polynomials and Reidemeister
torsion}, Pacific J. Math. 174 (1996), no. 2, 431--442.
\bibitem[KLM88]{KLM88}
P. H. Kropholler, P. A. Linnell and J. A. Moody, {\em Applications of a new $K$-theoretic theorem to
soluble group rings}, Proc. Amer. Math. Soc.  104 (1988),  no. 3, 675--684.
\bibitem[Li13]{Li11} T. Li, {\em Rank and genus of 3--manifolds}, J. Amer. Math. Soc. 26 (2013), 777--829 .
\bibitem[LS07]{LS07}
P. Linnell and P. Schick, {\em  Finite group extensions and the Atiyah conjecture}, J. Amer. Math. Soc. 20 (2007), no. 4, 1003--1051.  
\bibitem[Li13]{Li13}
Y. Liu, {\em Virtual cubulation of
nonpositively curved graph manifolds}, J. Topol. { 6} (2013), no. 4, 793--822. 
\bibitem[LS77]{LS77}
R. Lyndon and P. Schupp, {\em Combinatorial group theory}, Ergebnisse der Mathematik und ihrer Grenzgebiete, Band 89. Springer-Verlag, Berlin-New York, 1977.
\bibitem[Mi66]{Mi66}
J. Milnor, {\em  Whitehead torsion},  Bull. Amer. Math. Soc.  72 (1966), 358--426.
\bibitem[Mo94]{Mo94} K. Morimoto, {\em On composite tunnel-number one links,} Topology Appl. 59 (1994) 59--71.
\bibitem[MS91]{MS91} K. Morimoto and  M. Sakuma, {\em On unknotting tunnels for knots,} Math. Ann. 289 (1991) 143--167.
\bibitem[FN15]{FN15}
S. Friedl and M. Nagel, {\em Twisted Reidemeister torsion and the Thurston norm: graph manifolds and finite representations}, preprint (2015)
\bibitem[No82]{No82} F. H. Norwood, {\em Every two-generator knot is prime,} Proc. Amer. Math. Soc. 86 (1982) 143--147.
\bibitem[Pa77]{Pa77}
D. S. Passman, {\em The algebraic structure of group rings},
John Wiley \& Sons. XIV (1977)
\bibitem[PW14]{PW14}
 P. Przytycki and D. Wise, {\em Graph manifolds with boundary are virtually special},  J. Topology {7} (2014), 419--435.
\bibitem[PW12]{PW12}
 P. Przytycki and D. Wise, {\em Mixed 3--manifolds are virtually special}, preprint (2012).
\bibitem[RZ10]{RZ10}
 L. Ribes and P. Zalesskii,
  {\em Profinite Groups}, 2nd ed., Ergebnisse der Mathematik und ihrer Grenzgebiete, 3. Folge, vol. 40, Springer-Verlag, Berlin, 2010.
\bibitem[Ro94]{Ro94} J. Rosenberg, {\em Algebraic K -theory and its applications}, Graduate Texts in Mathematics 147, Springer, New York, 1994. 
\bibitem[Sc14]{Sc14}
K. Schreve, {\em The $L^2$-cohomology of discrete groups}, (2015), Ph.D Thesis. 
\bibitem[Se97]{Se97} J.-P. Serre, {\em Galois Cohomology}, Springer-Verlag, Berlin, 1997.
\bibitem[Th86]{Th86}
W. P. Thurston, {\em A norm for the homology of 3--manifolds}, Mem. Amer. Math. Soc. 59, no.
339 (1986), 99--130.
\bibitem[Ti70]{Ti70}
D. Tischler, {\em On fibering certain foliated manifolds over $S^1$}, Topology 9 (1970), 153--154.
\bibitem[TW96]{TW96} J. L. Tollefson and N. Wang, {\em Taut normal surfaces.} Topology 35 (1996), 55--75.
\bibitem[Tu01]{Tu01}
V. Turaev, {\em Introduction to Combinatorial Torsions}, Lectures in Mathematics, ETH Z\"urich (2001)
\bibitem[Tu02]{Tu02}
V. Turaev, {\em A homological estimate for the Thurston norm},
unpublished note (2002), arXiv:math.~GT/0207267
\bibitem[Wa94]{Wa94}
M. Wada, {\em Twisted Alexander polynomial for finitely presentable groups}, Topology 33, no. 2:
241--256 (1994)
\bibitem[Wald78]{Wald78} F. Waldhausen, {\em Some problems on 3-manifolds,} Proc. Sympos. Pure Math. 32 (1978), 313--322.
\bibitem[We72]{We72}
C. Weinbaum, {\em On relators and diagrams for groups with one defining relation},
Illinois J. Math. 16 (1972), 308--322.
\bibitem[Wi09]{Wi09}
D. Wise, {\em The structure of groups with a quasi-convex hierarchy}, Electronic Res. Ann. Math. Sci. 16 (2009), 44--55.
\bibitem[Wi12a]{Wi12a}
D. Wise, {\em The structure of groups with a quasi-convex hierarchy}, 189 pages, preprint (2012),
downloaded on October 29, 2012 from \\
\texttt{http://www.math.mcgill.ca/wise/papers.html}
\bibitem[Wi12b]{Wi12b}
D. Wise, {\em From riches to RAAGs: 3--manifolds, right--angled Artin groups, and cubical geometry}, CBMS Regional Conference Series in Mathematics, 2012.
\end{thebibliography}
\end{document}